\def\@@and{\MakeLowercase{and}}
\renewcommand*{\backref}[1]{}
\renewcommand*{\backrefalt}[4]{%
    \ifcase #1 (Not cited.)%
    \or        (Cited on page~#2.)%
    \else      (Cited on pages~#2.)%
    \fi
    }
\theoremstyle{definition}
\newtheorem{defn}{Definition}[section]
\newtheorem{exam}[defn]{Example}
\newtheorem{rem}[defn]{Remark}
\theoremstyle{plain}
\newtheorem{thm}[defn]{Theorem}
\newtheorem{lem}[defn]{Lemma}
\newtheorem{prop}[defn]{Proposition}
\newtheorem{coro}[defn]{Corollary}
\newcommand{\eps}{\varepsilon}
\newcommand{\calt}{\mathcal{T}}
\newcommand{\bbn}{\mathbb{N}}
\newcommand{\bbk}{\mathbb{K}}
\newcommand{\bbr}{\mathbb{R}}
\newcommand{\bbc}{\mathbb{C}}
\newcommand{\vecz}{\mathbf{0}}
\newcommand{\dd}{\mathop{}\!\mathrm{d}}
\DeclareMathOperator{\sspan}{span}
\DeclareMathOperator{\udens}{\overline{\mathrm{dens}}}
\DeclareMathOperator{\dens}{\mathrm{dens}}
\DeclareMathOperator{\ldens}{\underline{\mathrm{dens}}}
\DeclareMathOperator{\asym}{Asym}
\DeclareMathOperator{\dasym}{DAsym}
\DeclareMathOperator{\dprox}{DProx}
\DeclareMathOperator{\dunbd}{DUnbd}
\title{D\MakeLowercase{istributionally chaotic} $C_0$-\MakeLowercase{semigroups on complex sectors}}	
\author[Z. J\MakeLowercase{iang} ]{Z\MakeLowercase{hen}  Jiang}
\address[Z. Jiang]{Department of Mathematics,
	Shantou University, Shantou, 515821, Guangdong, China}
\email{jiangzhen17@outlook.com}
\urladdr{https://orcid.org/0000-0001-8013-9317}
\author[J. L\MakeLowercase{i}]{J\MakeLowercase{ian} Li}
\address[J. Li]{Department of Mathematics,
	Shantou University, Shantou, 515821, Guangdong, China}
\email{lijian09@mail.ustc.edu.cn}
\urladdr{https://orcid.org/0000-0002-8724-3050}
\author[Y. Y\MakeLowercase{ang}]{Y\MakeLowercase{ini} Yang}
\address[Y. Yang]{Department of Mathematics,
	Shantou University, Shantou, 515821, Guangdong, China}
\email{ynyangchs@foxmail.com}
\urladdr{https://orcid.org/0000-0001-6564-2213}
\subjclass[2020]{Primary: 47A16; Secondary:47D06, 37B05}
\keywords{Distributional chaos, distributionally irregular vector, $C_0$-semigroup,  translation $C_0$-semigroup, complex sector}
\begin{document}

\begin{abstract}
We explore distributional chaos for $C_0$-semigroups of linear operators on Banach spaces whose index set is a sector in the complex plane. 
We establish the relationship between distributional sensitivity and distributional chaos by
characterizing them in terms of distributionally (semi-)irregular vectors.
Additionally, we provide conditions under which a $C_0$-semigroup admits a linear manifold of distributionally irregular vectors.
Furthermore, we delve into the study of  distributional chaos for the translation $C_0$-semigroup on weighted $L_p$-spaces with a complex sector as the index set.
We obtain a sufficient condition for dense distributional chaos, expressed in terms of the weight. 
In particular, we construct  an example of a translation $C_0$-semigroup with a complex sector index set that is Devaney chaotic but not distributionally chaotic.
\end{abstract}

\maketitle


\section{Introduction}
In the past twenty years, a lot of researches has focused on various chaotic behaviors of linear operators, such as Devaney chaos \cite{GP2011}, Li-Yorke chaos \cite{BBMP2011,BBMP2015}, distributional chaos \cite{BBMP2013,BBPW2018}, and mean Li-Yorke chaos \cite{BBP2020}. 
To study the asymptotic behavior of the solutions of some differential equations, Desch, Schappacher, and Webb carried out a systematic study of the chaotic properties of a C0-semigroup in \cite{DSW1997}.
Now many of these results have been extended to $C_0$-semigroups, which can be regarded as a continuous generalization compared with the iteration of a single operator.  
We refer the reader to the monographs \cite{GP2011,BM2009} for more details on linear systems and to the book \cite{EN2000} for $C_0$-semigroups.

Recall that a one-parameter family $\{T_t\}_{t\in \bbr^+}$ of continuous linear operators on a Banach space $X$ is a $C_0$-semigroup if $T_0=I$, $T_s \circ T_t=T_{s+t}$ for all $ s, t \geq 0$, and $\lim_{s\to t}T_{s}x=T_t x$,  for all $x\in X$ and $t\geq 0$. 
Researchers have established connections between the behavior of full orbits under the $C_0$-semigroup and the behavior of discrete orbits corresponding to the iterates of the operators in the semigroup.
In \cite{CMP2007}, Conejero et al. proved that every single operator in a hypercyclic $C_0$-semigroup is also hypercyclic. Similar results hold for the property of distributional chaos \cite{ABMP2013} and mean Li-Yorke chaos \cite{BBP2020}.  

Recently, researchers have discussed various kinds of the dynamical properties of the translation $C_0$-semigroups on complex sectors of the form $\Delta:=\Delta(\alpha)=\{r e^{i\theta}\colon r\geq 0, |\theta|\leq \alpha\}$, for some $\alpha \in (0,\pi/2)$, such as Devaney chaos~\cite{CP2007}, hypercyclicity~\cite{CP2009} and recurrent hypercyclicity criterion~\cite{LXZ2023}. 
In~\cite{YCN2017}, Yao et al. studied distributional chaotic translation $C_0$-semigroups on complex sectors. 
In~\cite{CKPV2020}, Chaouchi et al. discussed the $f$-frequently hypercyclicity
$C_0$-semigroups on complex sectors.  
In~\cite{HLYS2024}, He et al. investigated transitivity and Li-Yorke chaos for $C_0$-semigroups on complex sectors. 

For the $C_0$-semigroup on complex sectors $\{T_t\}_{t\in \Delta}$, the situation becomes more complicated. On the one hand, the $C_0$-semigroup $\{T_t\}_{t\in \Delta}$ and its non-trivial operators no longer exhibit similar dynamical properties. 
For example,
Conejero and Peris \cite{CP2009} constructed a hypercyclic $C_0$-semigroups $\{T_t\}_{t\in \Delta}$ with a non-hypercyclic operator $T_{t_0}$ for any $ t_0\in \Delta$. 
On the other hand, the dynamical behavior of the semigroup on a complex sector index set  is much richer.
It is well known that a continuous linear operator $T: X\to X$ must be sensitive if it 
is Li-Yorke chaotic (resp. distributional  sensitive if it is distributional chaotic). 
In fact, this result is also valid for the $C_0$-semigroup $\{T_t\}_{t\in \bbr^+}$. 
Surprisingly, He et al. \cite{HLYS2024} constructed a $C_0$-semigroup on a complex sector that is Li-Yorke chaotic but not sensitive.

Using techniques from topological dynamics, the authors in \cite{JL2025} characterize three types of chaos within the framework of continuous endomorphisms of completely metrizable groups. 
In this paper, we extend this investigation to explore distributional chaos for $C_0$-semigroups on complex sectors.
We provide a characterization of distributional chaos in terms of semi-irregular vectors (respectively, irregular vectors) for $C_0$-semigroups on complex sectors.
Unlike Li-Yorke chaos, we observe that a distributionally chaotic semigroup on a complex sector index set must exhibit distributional sensitivity.
The key insight is to demonstrate that the upper density of a measurable set in a complex sector remains invariant under translation (See Lemma \ref{transitive-invariant-property-of-density}). Furthermore, in \cite{BP2012} the authors showed that Devaney chaos implies distributional chaos for the translation semigroup with $\bbr^+$ as an index set. However, we present an example illustrating that Devaney chaos does not necessarily imply distributional chaos for $C_0$-semigroups on complex sectors (See Example~\ref{Devaney-chaos-not-distributionally-chaos}). Notably, the translation semigroup constructed in this example exhibits distributional chaos when restricted to a ray, which contrasts with the case of nonnegative real number indices (See ~\cite{ABMP2013} and ~\cite{BBPW2018}).

The paper is organized as follows:
In Section 2, we recall some notations and preliminaries.
In Section 3, we study distributional  sensitivity and distributional chaos for a $C_0$-semigroup with a complex sector index set. 
In particular, using the so-called distributional semi-irregular vectors and distributional irregular vectors, 
we characterize distributional sensitivity and distributional chaos,
and investigate their relationships
for the $C_0$-semigroup with a complex sector as an index set. 
In Section 4, we further study the translation $C_0$-semigroup on weighted $L_p$-spaces. 
We give a sufficient condition for dense distributional chaos expressed in terms of the weight.
In particular, we construct an example of a translation $C_0$-semigroup on a complex sector that is Devaney chaotic but not distributional chaotic. 

\section{Preliminaries}
Let $\bbn$ denote the set of all positive integers, $\bbr$ the set of all real numbers and $\mathbb{C}$ the complex plane.
A \emph{complex sector} in $\mathbb{C}$ is defined as 
$\Delta(\alpha)=\{r e^{i\theta}\in\mathbb{C}\colon r\geq 0, |\theta|\leq \alpha\}$, for some $\alpha \in (0,\pi/2)$. 
For every $r>0$, we define $\Delta_r(\alpha)=\{t\in\Delta(\alpha)\colon |t| <r\}$.
It is clear that every complex sector $\Delta(\alpha)$ forms a semigroup under the operation of addition.
When the parameter $\alpha$ is unambiguous, we will denote  $\Delta(\alpha)$ simply by $\Delta$ for brevity.

Let $A$ be a Lebesgue measurable subset  of a complex sector $\Delta$.
The \emph{Lebesgue measure} of $A$ is denoted by $\mu(A)$.
The \emph{upper density} of $A$ (with respect to $\Delta$) is defined as
\[
  \udens(A)=\limsup_{r\to\infty}
  \frac{\mu(A\cap \Delta_r)}{\mu(\Delta_r)},
\]
and the \emph{lower density} of $A$ is defined as
\[      
  \ldens(A)=\liminf_{r\to\infty}
  \frac{\mu(A\cap \Delta_r)}{\mu(\Delta_r)}.
\]
If $\udens(A)=\ldens(A)$, the common value is called the \emph{density} of $A$, denoted by $\dens(A)$.

For $A\subset \Delta$ and $t_0\in\Delta$,
let $A-t_0=\{s\in\Delta \colon s+t_0\in A\}$
and $A+t_0=\{s\in\Delta \colon s-t_0\in A\}$.
We will need the following key lemma, which shows that
the upper density of a measurable set is invariant under translation.

\begin{lem}\label{transitive-invariant-property-of-density}
  Let $A\subset \Delta$ be a measurable subset
  and $t_0\in\Delta$. Then 
  \[
    \udens(A)=\udens(A-t_0)=\udens(A+t_0).
  \]
\end{lem}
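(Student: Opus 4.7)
The plan is to reduce the statement to a geometric estimate on the complex sector. First, by translation invariance of the planar Lebesgue measure,
\[
\mu\bigl((A-t_0)\cap\Delta_r\bigr) = \mu\bigl(\{s\in\Delta_r \colon s+t_0\in A\}\bigr) = \mu\bigl(A\cap(\Delta_r + t_0)\bigr),
\]
where $\Delta_r + t_0 = \{z+t_0 \colon z\in\Delta_r\}$ denotes the translate inside $\mathbb{C}$. Since $\Delta$ is closed under addition and $t_0\in\Delta$, one has $\Delta_r + t_0 \subset \Delta$. So it suffices to show $\mu(A\cap(\Delta_r+t_0))$ and $\mu(A\cap\Delta_r)$ differ by $o(\mu(\Delta_r))$ as $r\to\infty$, since dividing by $\mu(\Delta_r)=\alpha r^2$ will then match their $\limsup$'s.

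The key geometric step is the bound $\mu\bigl(\Delta_r\triangle(\Delta_r+t_0)\bigr)=O(r)$, which I would handle by splitting into the two asymmetric differences. For $(\Delta_r+t_0)\setminus\Delta_r$, any $z=s+t_0$ with $s\in\Delta_r$ and $|z|\ge r$ satisfies $r\le|z|\le|s|+|t_0|<r+|t_0|$, so the set lies in an annular sector of area $\alpha((r+|t_0|)^2 - r^2) = O(r)$. For $\Delta_r\setminus(\Delta_r+t_0)$, a point $z\in\Delta_r$ fails to lie in $\Delta_r+t_0$ precisely when $z-t_0\notin\Delta$ or $|z-t_0|\ge r$. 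The second alternative, combined with $|z|<r$, forces $|z|\ge r-|t_0|$ by the reverse triangle inequality, giving another $O(r)$ contribution. For the first alternative, one verifies geometrically that $\Delta\setminus(\Delta+t_0)$ lies in two infinite strips running parallel to the boundary rays of $\Delta$, each of width at most $|t_0|\sin(\alpha-\arg t_0) \le |t_0|$; its intersection with $\{|z|<r\}$ therefore has area $O(r|t_0|)=O(r)$.

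Combining everything, $|\mu(A\cap(\Delta_r+t_0))-\mu(A\cap\Delta_r)| \le \mu(\Delta_r\triangle(\Delta_r+t_0)) = O(r)$, and after dividing by $\mu(\Delta_r)=\alpha r^2$ the difference is $O(1/r)\to 0$. Passing to $\limsup_{r\to\infty}$ yields $\udens(A-t_0)=\udens(A)$. The second identity $\udens(A+t_0)=\udens(A)$ follows by applying this to the set $A+t_0$ in place of $A$, using the straightforward identity $(A+t_0)-t_0=A$ (both sides consist of the same points of $\Delta$, since $A+t_0\subset\Delta$).

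The main obstacle is the first alternative in the second step: the set $\Delta\setminus(\Delta+t_0)$ is \emph{unbounded}, so the naive intuition that a bounded shift in $\mathbb{C}$ perturbs things only on a bounded scale fails. What rescues the argument is that the \emph{width} of the offending strips is bounded by a constant depending on $t_0$ and $\alpha$, so their contribution scales linearly in $r$; against $\mu(\Delta_r) \asymp r^2$ this is negligible in the limit, which is what makes the sector a genuine Følner-type amenable setting for translations from $\Delta$.
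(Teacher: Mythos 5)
Your proof is correct, and it reaches the conclusion by a slightly different decomposition than the paper. You translate the truncated sector itself, writing $\mu((A-t_0)\cap\Delta_r)=\mu(A\cap(\Delta_r+t_0))$, and then bound the symmetric difference $\mu(\Delta_r\triangle(\Delta_r+t_0))=O(r)$; this forces you to control the unbounded set $\Delta\setminus(\Delta+t_0)$, which you correctly confine to two strips of width at most $|t_0|$ parallel to the boundary rays, contributing area $O(r|t_0|)$ inside $\Delta_r$. The paper instead sandwiches: since $t_0+\Delta_r\subset\Delta_{r+r_0}$ with $r_0=|t_0|$, it traps $(A-t_0)\cap\Delta_r$ between $A\cap\Delta_{r+r_0}\cap(t_0+\Delta_r)-t_0$ and $A\cap\Delta_{r+r_0}-t_0$, giving
\[
\mu(A\cap\Delta_{r+r_0})-\bigl(\mu(\Delta_{r+r_0})-\mu(\Delta_r)\bigr)\leq\mu((A-t_0)\cap\Delta_r)\leq\mu(A\cap\Delta_{r+r_0}),
\]
and then compares the density of $A-t_0$ at radius $r$ with that of $A$ at radius $r+r_0$, using $\mu(\Delta_{r+r_0})/\mu(\Delta_r)\to1$. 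The advantage of the paper's route is that it never needs the strip geometry at all: the only error term is $\mu(\Delta_{r+r_0})-\mu(\Delta_r)=O(r)$, computed from the sector area formula. The advantage of yours is that it isolates the genuinely relevant Følner-type statement $\mu(\Delta_r\triangle(\Delta_r+t_0))=o(\mu(\Delta_r))$, which makes transparent why the result holds and would generalize to other exhausting families. Both arguments close with the same observation that $(A+t_0)-t_0=A$ reduces the second identity to the first. One cosmetic point: the width of the second strip is $|t_0|\,|\sin(\alpha+\arg t_0)|$ rather than $|t_0|\sin(\alpha-\arg t_0)$, but the uniform bound $|t_0|$ you actually use is valid for both.
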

\begin{proof}
Since $t_0\in\Delta$, we denote $t_0=r_0 e^{i\theta_0}$
where $r_0>0$ and $|\theta_0|<\alpha$. First we show that $\udens(A)=\udens(A-t_0)$.
  For any $r>0$, $t_0+\Delta_r\subset \Delta_{r+r_0}$. Thus we have
  \begin{align*}
  (A\cap \Delta_{r+r_0})\setminus (\Delta_{r+r_0}\setminus (t_0+\Delta_r)) -t_0 & =A\cap \Delta_{r+r_0}\cap(t_0+\Delta_r)-t_0 \\
  &\subset A\cap(t_0+\Delta_r)-t_0=(A-t_0)\cap\Delta_r\\
  &\subset A\cap \Delta_{r+r_0}-t_0,
  \end{align*}
  and
  $\mu(A\cap \Delta_{r+r_0})-\mu(\Delta_{r+r_0})+\mu(\Delta_r)\leq
    \mu((A-t_0)\cap \Delta_r)\leq
    \mu(A\cap \Delta_{r+r_0})$.
  Then
  \begin{align*}
    \udens(A-t_0) & =\limsup_{r\to\infty}
    \frac{\mu((A-t_0)\cap \Delta_r)}{\mu(\Delta_r)} \\
               & \geq \limsup_{r\to\infty}  \frac{\mu(\Delta_{r+r_0})}{\mu(\Delta_r)} \frac{1}{\mu(\Delta_{r+r_0})} (\mu(A\cap \Delta_{r+r_0})-\mu(\Delta_{r+r_0})+\mu(\Delta_r)) \\
                & =\limsup_{r\to\infty} \frac{\mu(A\cap \Delta_{r+r_0})}{\mu(\Delta_{r+r_0})} =\udens(A)
  \end{align*}
  and
  \begin{align*}
    \udens(A-t_0) & =\limsup_{r\to\infty}
    \frac{\mu((A-t_0)\cap \Delta_r)}{\mu(\Delta_r)} \\
                & \leq
    \limsup_{r\to\infty}  \frac{\mu(\Delta_{r+r_0})}{\mu(\Delta_r)} \frac{\mu(A\cap \Delta_{r+r_0})}{\mu(\Delta_{r+r_0})}=\udens(A).
  \end{align*}
  This implies that  $\udens(A-t_0)=\udens(A)$.

  Since $(A+t_0)-t_0=A$, $\udens(A+t_0)=\udens(A)$.
\end{proof}

Let $X$ be a topological space. A subset of $X$ is called a \emph{$G_{\delta}$ set} if it can be expressed as the countable intersection of open subsets of $X$, a \emph{Cantor set} if it is homeomorphic to the Cantor ternary set in the unit interval. 
In a completely metrizable space $X$, a subset
of $X$ is called \emph{residual} if it contains a dense $G_\delta$ subset of $X$.
The following classical result is due to Mycielski~\cite{M1964}, which is an important topological tool to construct a chaotic set.

\begin{thm}\label{Mycielski}
  Let $X$ be a completely metrizable space without isolated points and $R$ be a dense $G_{\delta}$ subset of $X\times X$.
  Then for every sequence $(O_j)_j$ of nonempty open subsets of $X$, there exists a sequence $(K_j)_j$ of Cantor sets with $K_j\subset O_j$ such that
  \[
    \bigcup_{j=1}^\infty   K_j\times  \bigcup_{j=1}^\infty K_j\subset R\cup \Delta_X,
  \]
  where $\Delta_X=\{(x_1,x_2)\in X\times X\colon  x_1=x_2\}$.
  In addition, if $X$ is separable then we can require that the $\sigma$-Cantor set $\bigcup_{j=1}^\infty K_j$ is dense in $X$.
\end{thm}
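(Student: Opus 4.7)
The plan is to execute the classical Cantor scheme construction, exploiting the density of each layer of the $G_\delta$ set $R$ to force the product of any two branches into $R$.

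First I would fix a complete metric $d$ on $X$ compatible with its topology and write $R=\bigcap_{n\geq 1}U_n$ with each $U_n$ a dense open subset of $X\times X$. The goal is to construct, inductively on $n$, nonempty open sets $V_{j,s}\subset X$ indexed by $j\in\bbn$ with $j\leq n$ and finite binary words $s\in\{0,1\}^{<\omega}$ with $|s|\leq n$, so that (i) $\overline{V_{j,\varnothing}}\subset O_j$; (ii) $\overline{V_{j,s0}}\cup\overline{V_{j,s1}}\subset V_{j,s}$, $V_{j,s0}\cap V_{j,s1}=\varnothing$, and $\diam(V_{j,s})<2^{-|s|}$; and (iii) for any two distinct pairs $(j,s),(k,t)$ with $j,k\leq n$ and $|s|=|t|=n$, $\overline{V_{j,s}}\times\overline{V_{k,t}}\subset U_n$.

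At each stage, the fact that $X$ has no isolated points lets me split any nonempty open set into two disjoint nonempty open subsets of arbitrarily small diameter, giving clause (ii). Clause (iii) is then achievable because there are only finitely many ordered pairs of distinct index pairs at stage $n$, and each $U_n$ is dense open in $X\times X$, so I can shrink the $V$'s one pair at a time, the relations imposed on earlier pairs being preserved under further shrinking.

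Setting $K_j=\bigcap_{n\geq j}\bigcup_{|s|=n}\overline{V_{j,s}}$ produces a Cantor set contained in $O_j$; every point of $K_j$ is determined by a unique infinite binary branch. If $x\in K_j$, $y\in K_k$ and $(j,x)\neq(k,y)$, then from some level onward the labels determining $x$ and $y$ give distinct pairs, and by (iii) this forces $(x,y)\in U_n$ for all sufficiently large $n$, hence $(x,y)\in R$. For the separable addendum, I fix a countable base $\{W_m\}$ of nonempty open sets of $X$ and interleave it with the original sequence $(O_j)$ before running the construction; since some $K_j$ lies in each $W_m$, the union $\bigcup_j K_j$ is automatically dense.

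The main difficulty is the bookkeeping in clause (iii): simultaneously satisfying the $U_n$-condition for every ordered pair of distinct index pairs of length $n$. The clean remedy is to enumerate the finitely many pairs at stage $n$ and process them one at a time, each time using density of $U_n$ to shrink one of the two involved sets $V_{j,s}$ into an open box that lies inside $U_n$ relative to the other. All other conditions (diameter, disjointness, containment in $O_j$) are monotone under shrinking, so the construction closes up cleanly and yields the required Cantor sets.
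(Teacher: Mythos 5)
The paper itself offers no proof of this statement: it is quoted as a classical theorem of Mycielski with a citation to [M1964], so there is nothing in the paper to compare against line by line. Your Cantor-scheme argument is the standard proof of this (finite-arity, binary-relation) special case of Mycielski's theorem, and it is correct in outline; the bookkeeping you worry about in clause (iii) is handled exactly as you say, since all the imposed conditions are monotone under shrinking. Two small imprecisions should be patched. First, in clause (iii) you should require $\overline{V_{j,s}}\times\overline{V_{k,t}}\subset U_1\cap\dots\cap U_n$ (equivalently, replace $U_n$ by $\bigcap_{m\le n}U_m$, which is still dense open): as written you only conclude $(x,y)\in U_n$ for all sufficiently large $n$, which does not yield $(x,y)\in\bigcap_n U_n=R$ unless the $U_n$ are decreasing. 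Second, in clause (ii) you should demand $\overline{V_{j,s0}}\cap\overline{V_{j,s1}}=\varnothing$ rather than mere disjointness of the open sets; otherwise the coding map $\{0,1\}^{\bbn}\to K_j$ need not be injective and $K_j$ need not be homeomorphic to the Cantor set (it is only a continuous image of one). Both repairs are routine in a metric space, and the remaining steps --- compactness of $K_j$ via closedness plus total boundedness in a complete metric, and interleaving a countable base with $(O_j)_j$ to get density of $\bigcup_j K_j$ in the separable case --- are sound.
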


Let $X$ be a Banach space and $L(X)$ be the set of all continuous linear operators $T: X\to X$. 
A one-parameter family of operators $\{T_t\}_{t\in\Delta}\subset L(X)$ is a \emph{semigroup} if $T_0=I$,
and $T_t T_s=T_{s+t}$ for all $t,s\in\Delta$.
A semigroup $\{T_t\}_{t\in\Delta}$ is said to be a \emph{strongly continuous semigroup} or a \emph{$C_0$-semigroup} if for all $s\in\Delta$ we have $\lim_{t\to s} T_t x=T_s x$ in the norm for all $x\in X$.
By the Banach-Steinhaus Theorem, the $C_0$-semigroup $\{T_t\}_{t\in\Delta}$ is locally equicontinuous, i.e., for every $r>0$ we have $\sup\{\|T_t\| \colon t\in \Delta_r\}<\infty$. 
Moreover, similar to the case of nonnegative real number indices, the operator norm of a $C_0$-semigroup is exponentially bounded. 
The following propositions generalize \cite[Lemma~7.1]{GP2011} and \cite[Propposition~7.3]{GP2011}, whose proofs are immediate.

\begin{prop}\label{continuous-of-C0-semigroups}
Let $\calt=\{T_t\}_{t \in \Delta}$ be a family of operators on a Banach space $X$. 
Then the following assertions are equivalent:
  \begin{enumerate}
    \item $\lim_{s\to t}T_{s}x=T_t x$,  for all $x\in X$ and $t\geq 0$;
    \item $\{T_t\}_{t \in \Delta}$ is locally equicontinuous and there is a dense subset $X_0$ of $X$ such that 
    \[
     \lim_{s\to t}T_{s}x=T_t x, 
    \]
    for all $x\in X_0$ and $t\geq 0$;
    \item the map
          \[
            \Delta \times X\to X, (t, x)\to T_tx
          \]
          is continuous.
  \end{enumerate}
\end{prop}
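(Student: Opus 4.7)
My plan is to establish the cyclic chain $(1)\Rightarrow(2)\Rightarrow(3)\Rightarrow(1)$, mirroring the classical template for $C_0$-semigroups on $\mathbb{R}^+$ (the analogue of \cite[Lemma~7.1]{GP2011}) with the real interval $[0,r]$ replaced by the compact closed sub-sector $\overline{\Delta_r}\subset\mathbb{C}$.

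For $(1)\Rightarrow(2)$, hypothesis~(1) makes each orbit map $t\mapsto T_t x$ continuous on $\Delta$, hence bounded on the compact set $\overline{\Delta_r}=\{t\in\Delta:|t|\le r\}$ for every $r>0$. Thus the family $\{T_t:t\in\Delta_r\}$ is pointwise bounded on the Banach space $X$, and the Banach–Steinhaus theorem yields $\sup_{t\in\Delta_r}\|T_t\|<\infty$. This is exactly local equicontinuity, and taking $X_0=X$ completes~(2).

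For $(2)\Rightarrow(3)$, suppose $(t_n,x_n)\to(t,x)$ in $\Delta\times X$. Pick $r>0$ with $t,t_n\in\Delta_r$ for all $n$ and write $M=\sup_{s\in\Delta_r}\|T_s\|<\infty$. The triangle inequality gives
\[
\|T_{t_n}x_n-T_t x\|\le M\,\|x_n-x\|+\|T_{t_n}x-T_t x\|,
\]
reducing the problem to proving $T_{t_n}x\to T_t x$ for each fixed $x\in X$. For this, given $\varepsilon>0$ choose $y\in X_0$ with $\|x-y\|<\varepsilon/(3M)$ and estimate
\[
\|T_{t_n}x-T_t x\|\le 2M\|x-y\|+\|T_{t_n}y-T_t y\|,
\]
where the first summand is at most $2\varepsilon/3$ and the second is below $\varepsilon/3$ for large $n$ by the hypothesis on $X_0$. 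The implication $(3)\Rightarrow(1)$ is immediate: joint continuity of $(s,x)\mapsto T_s x$ restricts to separate continuity in $s$ once $x$ is fixed.

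The only mildly delicate point is the compactness of $\overline{\Delta_r}$ used in $(1)\Rightarrow(2)$, which converts pointwise continuity of orbits into pointwise boundedness ready for Banach–Steinhaus. Since $\Delta_r$ is a bounded sub-sector of $\mathbb{C}$, this presents no real obstacle, and the rest of the argument is a faithful transcription of the classical half-line proof — consistent with the authors' remark that the proofs are immediate.
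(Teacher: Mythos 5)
Your proof is correct and is exactly the standard adaptation of \cite[Lemma~7.1]{GP2011} that the authors have in mind when they declare the proof immediate: compactness of $\overline{\Delta_r}$ plus Banach--Steinhaus for $(1)\Rightarrow(2)$, the $\varepsilon/3$ density argument for $(2)\Rightarrow(3)$, and the trivial restriction for $(3)\Rightarrow(1)$. No gaps; note only that the ``$t\geq 0$'' in the statement should be read as $t\in\Delta$, as you implicitly do.
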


\begin{prop}
Let $\calt=\{T_t\}_{t \in \Delta}$ be a $C_0$-semigroup of operators on a Banach space $X$.
Then there exist $M\geq 1$ and $w\in \bbr$ such that  $\|T_t\|\leq M e^{w|t|}$ for all $t\in \Delta$.
\end{prop}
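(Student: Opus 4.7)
The plan is to adapt the standard argument for $C_0$-semigroups on $\mathbb{R}^+$, exploiting the fact that $\Delta(\alpha)$ is a cone, i.e.\ closed under multiplication by positive real numbers. The key observation is that, although $\Delta$ is two-dimensional, any given element $t \in \Delta$ can be shrunk to modulus at most $1$ by dividing by a suitable positive integer while remaining inside $\Delta$; the semigroup property then recovers $T_t$ as an $n$-th iterate of $T_{t/n}$.

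First I would invoke the local equicontinuity stated in the paragraph preceding the proposition: applying it with $r = 2$ yields a constant $M \geq 1$ (enlarging if necessary, which is permitted since $\|T_0\| = 1$) such that
\[
  \|T_t\| \leq M \quad \text{for every } t \in \Delta \text{ with } |t| \leq 1.
\]
For $t \in \Delta$ with $|t| \leq 1$ the desired inequality already holds trivially for any $w \geq 0$.

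Next, for $t \in \Delta$ with $|t| > 1$, I would set $n = \lfloor |t| \rfloor + 1$, so that $n \leq |t| + 1$ and $|t/n| < 1$. Because $\Delta(\alpha) = \{re^{i\theta} : r \geq 0,\ |\theta| \leq \alpha\}$ is closed under multiplication by positive reals, $t/n \in \Delta$. Iterating the semigroup identity $T_{s_1 + s_2} = T_{s_1} T_{s_2}$ gives $T_t = (T_{t/n})^n$, hence
\[
  \|T_t\| \leq \|T_{t/n}\|^{n} \leq M^{n} \leq M^{|t|+1} = M \, e^{w|t|},
\]
where $w := \log M \geq 0$. Combining both cases yields the stated bound for every $t \in \Delta$. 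There is no real obstacle here; the only point that distinguishes the sector case from the real case is the use of the cone structure to guarantee $t/n \in \Delta$, after which the argument is routine.
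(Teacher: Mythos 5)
Your proof is correct and is essentially the argument the paper has in mind: the paper omits the proof as ``immediate,'' referring to the standard real-case argument of \cite[Proposition~7.3]{GP2011}, and your adaptation via the cone structure (replacing the decomposition $t=n+s$ by $t=n\cdot(t/n)$ so that $T_t=(T_{t/n})^n$ with $|t/n|<1$) together with local equicontinuity on $\Delta_2$ is exactly the intended generalization. No gaps.
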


\section{Distributional chaos for \texorpdfstring{$C_0$}{C0}-semigroups on complex sectors}
In this section, we will discuss the distributional chaos for the $C_0$-semigroup defined on a complex sector. 

\subsection{Distributional asymptoticicity and distributional  proximality} 
In this subsection, we study the distributionally asymptotic relation and the distributionally proximal relation.

\begin{defn}
Let $\calt=\{T_t\}_{t \in \Delta}$ be a $C_0$-semigroup on $X$.
A pair $(x,y)\in X\times X$ is called \emph{distributionally asymptotic} if for every $\eps>0$,
\[
  \dens(\{t\in \Delta: \|T_tx-T_ty\|<\eps\})=1,
\]
and \emph{distributionally proximal} if for every $\eps>0$,
\[
  \udens(\{t\in \Delta: \|T_tx-T_ty\|< \eps\})=1.
\]
The \emph{distributionally asymptotic relation} and the \emph{distributionally proximal relation} of $\calt$, denoted by $\dasym(\calt)$ and $\dprox(\calt)$ , are the set of all distributionally asymptotic pairs and  distributionally proximal pairs respectively.

For any $x\in X$, the \emph{distributionally asymptotic} and the \emph{distributionally proximal} cell of $x$ are defined by
\[
  \dasym(\calt,x)=\{y\in X\colon (x,y)\in \dasym(\calt)\}
\]
and
\[
  \dprox(\calt,x)=\{y\in X \colon (x,y) \in \dprox(\calt)\},
\]
respectively.
\end{defn}

\begin{rem}
It is clear that $\dasym(\calt)\subset \dprox(\calt)$ 
and $\dasym(\calt,x)\subset \dprox(\calt,x)$ for every $x\in X$.
\end{rem}

\begin{lem}\label{G_delta-set}
  Let $\calt=\{T_t\}_{t \in \Delta}$ be a $C_0$-semigroup on $X$. Then for every $x\in X$, $\dprox(\calt,x)$ is a $G_\delta$ subset of $X$. Similarly, $\dprox(\calt)$ is a $G_\delta$ subset of $X\times X$.    
\end{lem}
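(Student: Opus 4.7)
The plan is to establish the second assertion first, namely that $\dprox(\calt)$ is a $G_\delta$ subset of $X\times X$, and then to deduce the first from it: since the map $y\mapsto(x,y)$ from $X$ to $X\times X$ is continuous, $\dprox(\calt,x)$ is the preimage of $\dprox(\calt)$ under this map, and the preimage of a $G_\delta$ set under a continuous map is itself $G_\delta$.

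The heart of the argument is a lower semicontinuity statement. For fixed $r>0$ and $\eps>0$, I would consider the function $\phi_{r,\eps}\colon X\times X\to[0,\infty)$ defined by
\[
\phi_{r,\eps}(x,y)=\mu\bigl(\{t\in\Delta_r:\|T_tx-T_ty\|<\eps\}\bigr),
\]
and prove that it is lower semicontinuous. The idea is to exploit the local equicontinuity of $\calt$: set $C_r=\sup_{t\in\Delta_r}\|T_t\|<\infty$. For any $\eps'<\eps$ and any $(x',y')$ with $\|x-x'\|+\|y-y'\|<(\eps-\eps')/C_r$, every $t\in\Delta_r$ satisfying $\|T_tx-T_ty\|<\eps'$ automatically satisfies $\|T_tx'-T_ty'\|<\eps$. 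This gives the set inclusion $\{t\in\Delta_r:\|T_tx-T_ty\|<\eps'\}\subset\{t\in\Delta_r:\|T_tx'-T_ty'\|<\eps\}$, whence $\phi_{r,\eps'}(x,y)\le\liminf_{(x',y')\to(x,y)}\phi_{r,\eps}(x',y')$. Letting $\eps'\uparrow\eps$ and using continuity of measure on the increasing nested family of sublevel sets, I obtain $\phi_{r,\eps}(x,y)\le\liminf_{(x',y')\to(x,y)}\phi_{r,\eps}(x',y')$, the desired lower semicontinuity.

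Next, I would unwind the definition of $\dprox(\calt)$. Since the ratio $\phi_{r,\eps}(x,y)/\mu(\Delta_r)$ is bounded above by $1$, the condition $\udens(\{t\in\Delta:\|T_tx-T_ty\|<\eps\})=1$ is equivalent to: for every $k,N\in\bbn$ there exists $r>N$ with $\phi_{r,\eps}(x,y)>(1-1/k)\mu(\Delta_r)$. Letting $\eps=1/n$ range over $n\in\bbn$ this yields
\[
\dprox(\calt)=\bigcap_{n\in\bbn}\bigcap_{k\in\bbn}\bigcap_{N\in\bbn}\ \bigcup_{r>N}\bigl\{(x,y)\in X\times X\colon\phi_{r,1/n}(x,y)>(1-1/k)\mu(\Delta_r)\bigr\}.
\]
By the lower semicontinuity established above, each set appearing in the inner (arbitrary) union is open; hence the union is open, and the countable triple intersection over $n,k,N$ is $G_\delta$.

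The main technical obstacle is precisely the lower semicontinuity of $\phi_{r,\eps}$, since the strict inequality in its definition blocks a direct one-step continuity argument. The device of introducing an auxiliary threshold $\eps'<\eps$, controlling the perturbation by the uniform bound $C_r$ supplied by local equicontinuity on $\Delta_r$, and then passing to the limit $\eps'\uparrow\eps$ is the key ingredient; everything else is bookkeeping of the quantifiers in the definition of $\udens$.
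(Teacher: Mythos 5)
Your proof is correct, and its overall skeleton matches the paper's: both write the distributionally proximal relation as a countable intersection of sets of the form ``there exists $r$ beyond a given bound with $\mu(\{t\in\Delta_r:\|T_tx-T_ty\|<\eps\})$ exceeding a proportion of $\mu(\Delta_r)$,'' and both reduce the problem to showing these pieces are open using the local equicontinuity bound $\sup_{t\in\Delta_r}\|T_t\|<\infty$. (Your use of three indices $n,k,N$ where the paper bundles them into one, and your derivation of the cell $\dprox(\calt,x)$ as a continuous preimage of $\dprox(\calt)$ rather than running the argument separately, are immaterial differences.) Where you genuinely diverge is in the device used to defeat the strict inequality in the definition of the sublevel set. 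The paper invokes inner regularity of Lebesgue measure to extract a compact $K$ inside $\{t\in\Delta_{r_1}:\|T_ty-T_tx\|<\tfrac1n\}$ of almost full measure, and then uses joint continuity of $(t,x)\mapsto T_tx$ to get a uniform gap $\delta=\tfrac1n-\max_{t\in K}\|T_ty-T_tx\|>0$ that survives a small perturbation of $y$. You instead shrink the threshold to $\eps'<\eps$, observe that the $\eps'$-sublevel set at $(x,y)$ is contained in the $\eps$-sublevel set at any nearby $(x',y')$, and recover the full measure by continuity of measure from below as $\eps'\uparrow\eps$, i.e., you prove $\phi_{r,\eps}$ is lower semicontinuous. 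Your route is slightly more economical: it needs neither the regularity of Lebesgue measure nor the joint continuity of the semigroup action on a compact set, only the operator-norm bound, and it isolates the reusable fact (lower semicontinuity of $\phi_{r,\eps}$) cleanly. The paper's route has the minor advantage of being the same template it reuses verbatim in Proposition~\ref{characterized-for-distributionally-sensitive}.
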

\begin{proof}
  We will show that $\dprox(\calt,x)$ is a $G_\delta$ subset of $X$. The 
  proof of the case of $\dprox(\calt)$ is similar.
  
  For any $x\in X$ and $n\in \bbn$, denote
  \[
    X_n= \bigl\{ y\in X: \exists r>n  \text{ s.t. } \mu(\{t\in \Delta_r: \|T_t y-T_t x\|<\tfrac{1}{n}\})>\mu(\Delta_r)(1-\tfrac{1}{n})\bigr\}.
  \]
  Then $\dprox(\calt, x)=\bigcap_{n=1}^{\infty}X_n$. We will show that $X_n$ is open for every $n\in\bbn$.
  
  Fix $n\in\bbn$ and $y\in X_n$, then there exists $r_1>n$ such that
  \[
    \mu(\{t\in \Delta_{r_1}: \|T_t y-T_t{x}\|<\tfrac{1}{n}\})>\mu(\Delta_{r_1})(1-\tfrac{1}{n}).
  \]
  By the regular property of the Lebesgue measures, there exists some close set $K$ such that
  \[
    K\subset \{t\in \Delta_{r_1}: \|T_t y-T_t{x}\|<\tfrac{1}{n}\}
  \]
  and
  \[
    \mu(K)>\mu(\Delta_{r_1})(1-\tfrac{1}{n}).
  \]
  Since the $C_0$-semigroup $\{T_t\}_{t\in\Delta}$ is locally equicontinuous,  $M:=\sup_{t\in \Delta_{r_1}}\{\|T_t\|\}<\infty$. 
  By Proposition \ref{continuous-of-C0-semigroups}, the map $\Delta \times X\to X, (t, x)\to T_tx$ 
  is continuous.
  Let $\delta:=\frac{1}{n}-\max_{t\in K}\|T_{t}y-T_{t}x\|>0$.
  For any $z\in B_{\|\cdot\|}(y,\frac{\delta}{2M})$ and $t\in K$
  we have
  \begin{equation*}
    \begin{split}
 \|T_t z-T_t{x}\|& =  \|T_t z- T_t y+T_t y-T_tx\|      \\
      &\leq     \|T_t\|\|z-y\|+\|T_t y-T_tx\| \\
     & \leq M\cdot\frac{\delta}{2M}+\|T_ty-T_tx\|<\frac{1}{n}.
    \end{split}
  \end{equation*}
Then $\{t\in \Delta_{r_1}: \|T_t z-T_t{x}\|<\frac{1}{n}\}\supset K$. Consequently,
 \[
 \mu(\{t\in \Delta_{r_1}: \|T_t z-T_t{x}\|<\tfrac{1}{n}\})\geq \mu(K)>\mu(\Delta_{r_1})(1-\tfrac{1}{n}), 
 \] 
which means that $z\in X_n$. Thus, $X_n$ is an open set.  
\end{proof}

\subsection{Distributional sensitivity}
In this subsection we investigate distributional sensitivity.

\begin{defn}
  Let $\calt=\{T_t\}_{t \in \Delta}$ be a $C_0$-semigroup on $X$.
  The semigroup  $\calt$ is said to be \emph{distributionally sensitive} if there exists $\delta>0$ such that for any $x\in X$ and $\eps>0$, there exists $y\in X$ with $\|x-y\|<\eps$ such that
  \[
    \udens(\{t\in \Delta: \|T_tx-T_ty\|> \delta)=1.
  \]
  The number $\delta$ is called a distributionally  sensitivity constant for $\{T_t\}_{t \in \Delta}$.  
\end{defn}

Now we introduce distributionally unbounded vectors and characterize distributional sensitivity. 
\begin{defn}
Let $\calt=\{T_t\}_{t \in \Delta}$ be a $C_0$-semigroup on $X$. A vector $x\in X$ is called \emph{distributionally unbounded} if there exists a measurable subset $A$ of $\Delta$ with $\udens(A)=1$ such that
  \[
    \lim_{t\in A, \, |t|\to \infty} \|T_t x\|=\infty.
  \]
Denote the collection of all distributionally unbounded vectors as $\dunbd(\calt)$.
\end{defn}

\begin{prop}\label{characterized-for-distributionally-sensitive}
Let $\calt=\{T_t\}_{t \in \Delta}$ be a $C_0$-semigroup on $X$. 
Then the following assertions are equivalent:
  \begin{enumerate}
    \item $\calt$ is distributionally sensitive; 
    \item $\calt$ admits a distributionally unbounded vector;
    \item $\dunbd(\calt)$ is a dense $G_{\delta}$ subset of $X$.
  \end{enumerate}
\end{prop}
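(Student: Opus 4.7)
The plan is to prove the cycle $(3)\Rightarrow(2)\Rightarrow(1)\Rightarrow(3)$, with the first two implications being short and the bulk of the work in $(1)\Rightarrow(3)$. For $(3)\Rightarrow(2)$, the Baire category theorem in the completely metrizable space $X$ shows that a dense $G_\delta$ set is non-empty, so $\dunbd(\calt)\ne\emptyset$. For $(2)\Rightarrow(1)$, I pick any $u\in\dunbd(\calt)$ with witnessing set $A\subset\Delta$ of upper density $1$ on which $\|T_t u\|\to\infty$, and claim $\calt$ is distributionally sensitive with constant $\delta=1$: given $x\in X$ and $\eps>0$, setting $y=x+\eta u$ with $\eta=\eps/(2\|u\|)$ gives $\|y-x\|<\eps$, and since $\|T_t y - T_t x\|=\eta\|T_t u\|$, the set $\{t : \|T_t u\|>1/\eta\}$ differs from $A$ only by a bounded (hence density-$0$) subset and therefore has upper density $1$.

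For $(1)\Rightarrow(3)$, I will show $\dunbd(\calt)$ is a dense $G_\delta$ subset of $X$ by writing
\[
\dunbd(\calt)=\bigcap_{n\in\bbn}D_n,\qquad D_n=\{x\in X : \udens(\{t\in\Delta : \|T_t x\|>n\})=1\},
\]
where the equivalence with the original formulation follows by a standard diagonalisation over $n$. For the $G_\delta$ property, each $D_n$ can be decomposed as $\bigcap_k\bigcup_{r>k}U_{n,k,r}$ with
\[
U_{n,k,r}=\{x\in X : \mu(\{t\in\Delta_r : \|T_t x\|>n\})>\mu(\Delta_r)(1-1/k)\},
\]
and each $U_{n,k,r}$ is open by the argument of Lemma~\ref{G_delta-set}: inner regularity gives a compact $K\subset\{t\in\Delta_r : \|T_t x\|>n\}$ of large measure, and continuity of $(t,x)\mapsto T_t x$ (Proposition~\ref{continuous-of-C0-semigroups}) together with local equicontinuity ensure $\|T_t y\|>n$ on $K$ whenever $y$ is close enough to $x$.

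For density, it suffices by Baire's theorem to show each $D_n$ is dense. Distributional sensitivity combined with scaling---apply (1) at $x=0$ with a very small parameter, then rescale the resulting witness by $L/\delta$---yields, for every $L>0$ and $\eps'>0$, a vector $v\in X$ with $\|v\|<\eps'$ and $\udens(\{t : \|T_t v\|>L\})=1$. Given $x_0\in X$ and $\eps>0$, I plan to construct $z=x_0+\sum_{k\geq 1}v_k$ inductively, with $\|v_k\|<\eps\cdot 2^{-k-1}$ (so $\|z-x_0\|<\eps/2$) and an increasing sequence of radii $r_k\nearrow\infty$ arranged so that, on a subset of $\Delta_{r_k}$ of relative measure exceeding $1-2^{-k}$,
\[
\|T_t v_k\|>k+Me^{wr_k}(\|x_0\|+\eps).
\]
Since local equicontinuity gives $\|T_t\|\leq Me^{w|t|}$ on $\Delta_{r_k}$, the right-hand side bounds $\|T_t x_0\|+\sum_{m\ne k}\|T_t v_m\|$ there, and the triangle inequality yields $\|T_t z\|>k$ on that large subset; the radii $r_k$ then witness $z\in D_k$ for every $k$, whence $z\in\dunbd(\calt)$.

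The hardest point will be the inductive coordination in the density step. The threshold $L_k$ for $v_k$ must exceed $k+Me^{wr_k}(\|x_0\|+\eps)$, but the radius $r_k$ that witnesses the density-$1$ property of $v_k$ is determined only \emph{after} $v_k$ itself is chosen. I plan to resolve this circularity by setting the thresholds to grow super-exponentially in $k$ (to absorb any $r_k$ forced by the density-$1$ property), shrinking $\|v_k\|$ proportionally via the scaling flexibility in~(1), and selecting $r_k$ from the density-$1$ set of $v_k$ within a controlled window; balancing the exponential operator-norm bound against the scaling flexibility will require careful bookkeeping.
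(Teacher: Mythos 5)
Your $(2)\Rightarrow(1)$ and the $G_\delta$ part of $(1)\Rightarrow(3)$ match the paper. The density step of $(1)\Rightarrow(3)$, however, has a fatal flaw, and it is not (only) the circularity you flag at the end. You require $\|T_t v_k\|>k+Me^{wr_k}(\|x_0\|+\eps)$ on a subset of $\Delta_{r_k}$ of positive measure, with $\|v_k\|<\eps 2^{-k-1}$. But for every $t\in\Delta_{r_k}$ one has $\|T_t v_k\|\le \bigl(\sup_{s\in\Delta_{r_k}}\|T_s\|\bigr)\|v_k\|< Me^{wr_k}\cdot\eps 2^{-k-1}<Me^{wr_k}(\|x_0\|+\eps)$, so the set on which your inequality holds is \emph{empty}: the same uniform operator-norm bound you invoke to control $\|T_t x_0\|+\sum_{m\neq k}\|T_t v_m\|$ also caps $\|T_t v_k\|$ below your threshold. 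Any attempt to dominate the worst-case bound $M_k\|x_0\|$ of $\|T_t x_0\|$ by a perturbation of much smaller norm is doomed for this reason; one would have to dominate $\|T_t x_0\|$ \emph{pointwise} on the relevant set, which distributional sensitivity does not obviously supply. And even setting this aside, the circularity you acknowledge is real and unresolved: upper density $1$ only makes the set of admissible radii unbounded, with no a priori bound on its first element past any given $\rho$ in terms of $k$ or $\|v_k\|$, so no growth rate of thresholds "absorbs" $r_k$.

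The paper avoids both problems by proving density of each level set $X_n=\{x\colon \exists r>n,\ \mu(\{t\in\Delta_r\colon\|T_tx\|>n\})>\mu(\Delta_r)(1-\tfrac1n)\}$ \emph{separately} (Baire then gives the dense $G_\delta$ intersection), via a finite pigeonhole rather than domination. Given $B_{\|\cdot\|}(x_0,\eps)$, sensitivity at $\vecz$ produces $x'$ of very small norm $\le M:=\frac{\eps\delta}{4n^2}$ and a radius $r_1>n$ with $A:=\{t\in\Delta_{r_1}\colon\|T_tx'\|>\delta\}$ of relative measure $\ge 1-\tfrac1{2n}$. Among the $2n$ candidates $x_s=x_0+\frac{\eps s}{2nM}x'$ (all in the ball), for each fixed $t\in A$ at most one can satisfy $\|T_tx_s\|\le n$, since any two such would differ by at most $2n$ while $\|T_t x_s-T_t x_m\|=\frac{|s-m|\eps}{2nM}\|T_tx'\|>2n$. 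Hence the sets $B_s\cap A$ are pairwise disjoint, some $B_{s_0}\cap A$ has measure $\le\mu(A)/2n$, and $x_{s_0}\in X_n$. Note that no comparison of $\|T_tv\|$ with $\|T_tx_0\|$ and no coordination across $n$ is needed. You would need to replace your series construction with an argument of this kind.
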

\begin{proof}
$(1)\Rightarrow(3)$. For any $n\in \bbn$, denote
  \[
    X_n:=\Bigl\{x\in X: \exists r>n \quad \text{s.t.} \quad \mu(\{t\in \Delta_r: \|T_t x\|>n\})> \mu(\Delta_r)(1-\frac{1}{n})  \Bigr\}.
  \]
  Then $
  \dunbd(\calt)=\bigcap_{n=1}^{\infty} X_n$.
  Similar to the proof of Lemma~\ref{G_delta-set}, we know that $X_n$ is open for every $n\in\bbn$. 
  
  Fix $n\in\bbn$, 
  we now show that $X_n$ is dense. 
  For any nonempty open set $U$, there exists $x_0\in X$ and $\eps>0$ such that
  $B_{\|\cdot\|}(x_0,\eps)\subset U$.
   Let $\delta$ be the distributionally sensitive constant. 
   There exists $x'\in X$ with $\|x'\|\leq M:=\frac{\eps\delta}{4n^2}$ and ${r_1}>n$ such that
  \[
    \mu(\{t\in \Delta_{r_1}: \|T^i x'\|>\delta\})\geq \mu(\Delta_{r_1})\bigl(1-\tfrac{1}{2n}\bigr).
  \]
  Consider the vectors
  \[
    x_s:=x_0+\frac{\eps s x'}{2nM} \quad (s=0,1,...2n-1).
  \]
  We find that
  \[
    \|x_s- x_0\|=\Bigl\|\frac{\eps s x'}{2nM}\Bigr\|\leq \eps.
  \]
  Then we have that $\{x_s: 0\leq s\leq 2n-1\}\subset U$. We show that there exists some $0\leq s_0\leq 2n-1$ such that $x_{s_0}\in X_n$.
  Let $A:=\{t\in \Delta_{r_1}: \|T_t x'\|\}>\delta\}.$ Then $\mu(A)\geq \mu(\Delta_{r_1})(1-\frac{1}{2n})$. Let $B_s:=\{j\in \Delta_r: \|T_j x_s\|\leq n\}$. If $s, m\in \{0,1,...,2n-1\}$ and $s\neq m$, then $B_s \cap B_m \cap A=\varnothing$. If not, there exists some $s\neq m$ and $t_1\in B_s \cap B_m\cap A$. Then
  \[
    \|T_{t_1}x_s-T_{t_1}x_m\|= \frac{\|s-m\|\cdot \eps \|T_{t_1} x'\|}{2nM}> \frac{\eps\delta}{2nM}=2n
  \]
  and
  \[
    \|T_{t_1}x_s-T_{t_1}x_m\|\leq \|T_{t_1}x_s\|+\|T_{t_1}x_m\|\leq 2n,
  \]
  which is a contradiction. So there exists some $0\leq s_0\leq 2n-1$ such that $\mu(B_{s_0}\cap A)\leq \frac{\mu(A)}{2n}$.
  Then
  \[
    \mu(A \setminus B_{s_0})\geq \mu(\Delta_{r_1})\bigl(1-\tfrac{1}{2n}\bigr)^2\geq \mu(\Delta_{r_1}) \bigl(1-\tfrac{1}{n}\bigr).
  \]
  For any $t\in A \setminus B_{s_0}$ we have $\|T_t x_{s_0}\|> n$, which implies that $ x_{s_0}\in X_n$, as desired.

  $(3)\Rightarrow(2)$ is trivial.

  $(2)\Rightarrow(1)$. Let $x_0$ be a distributionally unbounded vector. For any $x\in X$ and $\eps>0$, there exists some $k_0\in \bbk\setminus\{0\}$ such that $\|k_0x_0\|<\eps$. Clearly, $k_0 x_0$ is also distributionally unbounded. For any $M>0$, we have $\|x+k_0x_0-x\|<\eps$
  and
  \[
    \udens(\{t\in\Delta \colon \|T_t(x)-T_t(x+k_0x_0)\|> M)=1.
  \]
  Hence, $\{T_t\}_{t \in \Delta}$ is distributionally sensitive.
\end{proof}

Now we will introduce the definition of distributionally semi-irregular vectors and 
distributionally irregular vectors. 
\begin{defn}
   Let $\calt=\{T_t\}_{t \in \Delta}$ be a $C_0$-semigroup on $X$.
  A vector $x\in X$ is called \emph{distributionally semi-irregular} if there exist measurable subsets $A,B\subset \Delta$ with $\udens(A)=1$ 
  and $\udens(B)=1$
  such that 
  \[\lim_{t\in A,\, |t|\to \infty}\|T_tx\|=0\ \text{and}\ \lim_{t\in B,\, |t|\to\infty}\|T_t x\|>0,\] 
and  \emph{distributionally irregular} if there exist measurable subsets $A,B\subset \Delta$ with $\udens(A)=1$ 
  and $\udens(B)=1$ such that 
  \[\lim_{t\in A,\, |t|\to \infty}\|T_tx\|=0\ \text{and}\
  \lim_{t\in B,\,|t|\to\infty}\|T_t x\|=\infty.\] 
\end{defn}

He et al. \cite{HLYS2024} observed that there exists a semigroup $\calt=\{T_{t}\}_{t\in \Delta}$ admits a semi-irregular vectors but not sensitive.
However, we will prove that if a semigroup $\calt=\{T_{t}\}_{t\in \Delta}$ admits a distributionally semi-irregular vector, then it is distributionally sensitive.

To this end, we need the following proposition, which shows that the set of all distributionally semi-irregular vectors is a $\calt$-invariant set. 

\begin{prop}\label{the-orbit-of-DC-vector-is-DC-vector}
  Let $\calt=\{T_t\}_{t \in \Delta}$ be a $C_0$-semigroup on $X$. Then the set of all distributionally semi-irregular vectors is a $\calt$-invariant set. Similarly,
  the set of all distributionally irregular vectors is a $\calt$-invariant set.
\end{prop}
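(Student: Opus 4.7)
The plan is to prove both statements simultaneously by showing that, for every $s\in\Delta$ and every distributionally semi-irregular (resp.\ distributionally irregular) vector $x$, the translate $T_s x$ inherits the same property. Given witnessing sets $A,B\subset\Delta$ for $x$ (with $\udens(A)=\udens(B)=1$), the natural candidates for witnesses of $T_s x$ are the shifted sets $A':=A-s$ and $B':=B-s$, in the sense of the translation notation introduced just before Lemma~\ref{transitive-invariant-property-of-density}.

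The first step is to control the densities of the shifted sets. Here Lemma~\ref{transitive-invariant-property-of-density} applies immediately: $\udens(A')=\udens(A-s)=\udens(A)=1$, and likewise $\udens(B')=1$. So the density hypotheses carry over for free, and no new density calculation is needed.

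The second step is to transfer the norm-growth conditions along these sets using the semigroup law $T_t(T_s x)=T_{t+s}x$. For $t\in A'$ one has $u:=t+s\in A$, and the triangle-type inequality $|u|=|t+s|\geq |t|-|s|$ ensures that $|t|\to\infty$ along $A'$ forces $|u|\to\infty$ along $A$. Hence, from $\lim_{u\in A,\,|u|\to\infty}\|T_u x\|=0$ one obtains $\lim_{t\in A',\,|t|\to\infty}\|T_t(T_s x)\|=0$. The same translation trick applied to $B'$ handles the second limit condition, in the semi-irregular case (where $\|T_u x\|$ stays bounded below by some positive constant for large $|u|\in B$) as well as in the irregular case (where $\|T_u x\|\to\infty$ along $B$).

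I do not expect any serious obstacle. The two items that merit a line of verification are that $A-s$ is a measurable subset of $\Delta$ (it is the translate of the measurable set $A\cap(s+\Delta)$ by $-s$) and that the ``limit along a set as $|t|\to\infty$'' convention is preserved under the shift, which is exactly what the estimate $|t+s|\geq|t|-|s|$ delivers. Everything else is just an application of Lemma~\ref{transitive-invariant-property-of-density} combined with the semigroup identity, so the proof should be short and routine.
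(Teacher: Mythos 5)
Your proposal is correct and follows essentially the same route as the paper: translate the witnessing sets to $A-s$ and $B-s$, invoke Lemma~\ref{transitive-invariant-property-of-density} for the densities, and use the semigroup identity $T_t(T_s x)=T_{t+s}x$ to transfer the limit conditions. The extra remarks on measurability of $A-s$ and on why $|t|\to\infty$ along $A-s$ forces $|t+s|\to\infty$ are sound and only make explicit what the paper leaves implicit.
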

\begin{proof}
We will show that the set of all distributionally semi-irregular vectors is a $\calt$-invariant set. For the case of the set of all distributionally irregular vectors, the proof is similar. 

Suppose that $x_0\in X$ is a distributionally semi-irregular vector. 
Then there exists a measurable subset $A\subset \Delta$ with $\udens(A)=1$ such that 
  $$\lim_{t\in A,\, |t|\to \infty}\|T_tx_0\|=0,$$ 
  and there exists a measurable subset 
  $B\subset \Delta$ with $\udens(B)=1$ such that 
  \[
  \lim_{t\in B,\, |t|\to\infty}\|T_tx_0\|>0.
  \]
Fix $t_0\in \Delta$, we will show that 
$T_{t_0}x_0$ is a distributionally semi-irregular vector.  
By Lemma~\ref{transitive-invariant-property-of-density}, $\udens(A-t_0)=\udens(A)=1$ and $\udens(B-t_0)=\udens(B)=1$. It is clear that 
  \[
    \lim_{t\in A-t_0, \, |t|\to \infty}\|T_t(T_{t_0}x_0)\|=\lim_{t\in A, \, |t|\to \infty}\|T_tx_0\|=0
  \]
  and
  \[
    \lim_{t\in B-t_0, \, |t|\to \infty}\|T_t T_{t_0} x_0\|=\lim_{t\in B, \, |t|\to \infty}\|T_tx_0\|>0,
  \]
  which implies that $T_{t_0}x_0$ is a distributionally semi-irregular vector. 
\end{proof}

\begin{thm}\label{DC-point-D-sensitive}
  Let $\calt=\{T_t\}_{t \in \Delta}$ be a $C_0$-semigroup on $X$. If $\{T_t\}_{t \in \Delta}$ admits a distributionally semi-irregular vector, then $\{T_t\}_{t \in \Delta}$ is distributionally sensitive.
\end{thm}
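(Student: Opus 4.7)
The plan is to verify distributional sensitivity directly from the definition, using the two witness sets of a semi-irregular vector to produce small perturbations of any base point and to guarantee a sensitivity constant that depends only on the semigroup (not on the base point or the perturbation size). Let $x_0$ be the given distributionally semi-irregular vector with measurable witness sets $A, B \subset \Delta$ of upper density $1$ satisfying $\lim_{t \in A,\, |t| \to \infty} \|T_t x_0\| = 0$ and $\lim_{t \in B,\, |t| \to \infty} \|T_t x_0\| > 0$. From the latter I extract constants $c > 0$ and $R > 0$ such that $\|T_t x_0\| > 2c$ for every $t \in B$ with $|t| > R$; the value $c$ will serve as the distributional sensitivity constant.

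Given arbitrary $x \in X$ and $\eps > 0$, the perturbation I would construct is $y := x + T_{t_0} x_0$, where $t_0 \in A$ is chosen with $|t_0|$ large enough to guarantee $\|T_{t_0} x_0\| < \eps$. Such $t_0$ exists since $\udens(A) = 1$ forces $A$ to be unbounded, and $\|T_t x_0\| \to 0$ along $A$. Then $\|y - x\| = \|T_{t_0} x_0\| < \eps$, and by the semigroup property $T_t y - T_t x = T_{t+t_0} x_0$. Hence
\[
  \{t \in \Delta : \|T_t y - T_t x\| > c\} \supset (B - t_0) \setminus K,
\]
where $K := \{t \in \Delta : |t + t_0| \leq R\}$ is bounded.

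To finish, I would invoke Lemma~\ref{transitive-invariant-property-of-density} to get $\udens(B - t_0) = \udens(B) = 1$, and then note that removing the bounded (hence density-zero) set $K$ preserves upper density $1$. Therefore $\udens(\{t \in \Delta : \|T_t y - T_t x\| > c\}) = 1$, which verifies the definition of distributional sensitivity with constant $c$. The main conceptual point—and the only non-trivial step—is the translation-invariance of upper density on the sector provided by Lemma~\ref{transitive-invariant-property-of-density}: without this, shifting the density-$1$ set $B$ by $t_0$ could destroy the density hypothesis, and the constant $c$ would degenerate with $\eps$. The rest of the argument is just a careful choice of the shift $t_0$ so that the perturbation is small while the orbit remains controlled.
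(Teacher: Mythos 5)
Your proof is correct and follows essentially the same route as the paper: perturb $x$ by $T_{t_0}x_0$ with $t_0\in A$ chosen far out so the perturbation is small, and use the translation-invariance of upper density (Lemma~\ref{transitive-invariant-property-of-density}) to keep the separation set of upper density $1$. The only cosmetic difference is that the paper packages the shift through Proposition~\ref{the-orbit-of-DC-vector-is-DC-vector} (invariance of semi-irregular vectors) rather than translating $B$ directly, and your version makes it slightly more explicit that the sensitivity constant does not depend on $\eps$.
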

\begin{proof}
  Suppose that $x_0\in X$ is a distributionally semi-irregular vector. 
 Then for any $\eps>0$, there exists some $t_1\in \Delta$ such that $\|T_{t_1}x_0\|<\eps$. By Proposition~\ref{the-orbit-of-DC-vector-is-DC-vector}, 
 $T_{t_1}x_0$ is also distributionally semi-irregular,
 which means that there exists a measurable subset $A\subset \Delta$ with $\udens(A)=1$ such that $\lim_{t\in A,\, |t|\to \infty}\|T_t(T_{t_1}x_0)\|=0$ and there exists a measurable subset 
  $B\subset \Delta$ with $\udens(B)=1$ such that 
  \[
  2\delta:=\lim_{t\in B, \, |t|\to \infty}\|T_t(T_{t_1}x_0)\|>0.
  \]
 Without loss of generality, we can assume that for every $t\in B$, $\|T_t(T_{t_1}x_0)\|\geq \delta$. 
 Thus, for any $x\in X$,
  \[
    B\subset \{t\in \Delta: \|T_t(T_{t_1}x_0)\|\geq \delta \}=
  \{t\in \Delta: \|T_t x- T_t(x+T_{t_1}x_0)\|\geq \delta \}.
  \] Then
\[
  \udens(\{t\in \Delta: \|T_t x- T_t(x+T_{t_1}x_0)\|\geq \delta \})\geq  \udens(B)=1.
\]
Since $x+T_{t_1}x_0 \in B_{\|\cdot\|}(x,\eps)$ and
by the arbitrariness of $\eps$, $\calt$ is distributionally sensitive.
\end{proof}

\subsection{Distributional chaos}
In this subsection we will characterize 
distributional chaos by distributionally semi-irregular and 
distributionally irregular vectors. Then we 
will further investigate the relation of 
distributional chaos and distributionally sensitive. 

First, we introduce the definitions of distributional chaotic and dense distributional chaotic.

\begin{defn}
  Let $\{T_t\}_{t \in \Delta}$ be a $C_0$-semigroup on $X$. A pair $(x,y)\in X\times X $ is called \emph{distributionally chaotic}  if there exists $\delta>0$ such that for any $\eps>0$, we have
  \[
    \udens(\{t\in \Delta: \|T_tx-T_ty\|<\eps\})=1,
  \]
  and
  \[
    \udens(\{t\in \Delta: \|T_tx-T_ty\|> \delta \})=1.
  \]
  
  A $C_0$-semigroup $\{T_t\}_{t \in \Delta}$ is called \emph{distributionally chaotic} if there exists an uncountable  set $K$, such that any two distinct points $x,y$ of $K$ form a distributional chaotic pair, and the set $K$ is called \emph{distributionally scrambled set}. 
  
  A $C_0$-semigroup is called \emph{densely distributionally chaotic} if $\{T_t\}_{t \in \Delta}$ admits a dense distributional scrambled set.
\end{defn}

\begin{rem}\label{rem:d-chaos-d-irregular}
From the definition we can see that $(x,y)\in X\times X $ is a distributionally chaotic pair  if and only if $x-y$ is a distributionally semi-irregular. Thus if the semigroup $\calt:=\{T_t\}_{t \in \Delta}$ is distributionally chaotic, then there exists a distributionally semi-irregular vector.
\end{rem}

Now we give a characterization of the existence of a dense set of distributionally semi-irregular vectors for $C_0$-semigroups on complex sectors.

\begin{thm}
  \label{characterize-of-dense-distributionally-Li-Yorke-chaos}
  Let $\calt=\{T_t\}_{t \in \Delta}$ be a $C_0$-semigroup on $X$. Then the following assertions are equivalent:
\begin{enumerate}
\item $\calt$ admits a dense set of distributionally semi-irregular vectors;
\item $\calt$ admits a residual set of distributionally irregular vectors;
\item $\dprox(\calt)$ is dense in $X\times X$ and $\calt$ is distributionally sensitive;
\item for every sequence $(O_j)_j$ of nonempty open subsets of $X$, there exists a sequence $(K_j)_j$ of Cantor sets with $K_j\subset O_j$ such that $\bigcup_{j=1}^{\infty}K_j$ is a distributionally scrambled set.
  \end{enumerate}
In particular, if in addition $X$ is separable, the above assertions are also equivalent to the following:
  \begin{enumerate}
    \item [(5)] $\calt$ is dense distributional chaos.
  \end{enumerate}
\end{thm}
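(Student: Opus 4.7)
The plan is to establish the cyclic chain $(1) \Rightarrow (2) \Rightarrow (3) \Rightarrow (4) \Rightarrow (1)$ and then handle the separable case via $(4) \Rightarrow (5) \Rightarrow (1)$.

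For $(1) \Rightarrow (2)$, the first step is to identify the set of vectors $x$ for which there exists a measurable $A \subseteq \Delta$ with $\udens(A) = 1$ and $\lim_{t \in A,\, |t|\to\infty}\|T_tx\| = 0$ with $\dprox(\calt, 0)$. One inclusion is immediate; the converse is a standard diagonal extraction along radii $r_n \to \infty$ with $\mu(\{t \in \Delta_{r_n} : \|T_tx\| < 1/n\}) > (1 - 1/n)\mu(\Delta_{r_n})$, concatenated into a single set $A$. By Lemma~\ref{G_delta-set}, this set is $G_\delta$, and it is dense because it contains the hypothesized dense family of distributionally semi-irregular vectors. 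Meanwhile, Theorem~\ref{DC-point-D-sensitive} gives distributional sensitivity of $\calt$, so Proposition~\ref{characterized-for-distributionally-sensitive} tells us $\dunbd(\calt)$ is a dense $G_\delta$. A distributionally irregular vector is precisely an element of the intersection of these two sets, and Baire's theorem concludes.

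For $(2) \Rightarrow (3)$, distributional sensitivity follows from Proposition~\ref{characterized-for-distributionally-sensitive} since any irregular vector is unbounded. To see that $\dprox(\calt)$ is dense, given $(x_0, y_0) \in X \times X$ and $\eps > 0$, use density of the residual set of irregular vectors to pick $v$ with $\|v - (x_0 - y_0)\| < \eps$, and set $y := y_0$, $x := y_0 + v$. Then $(x, y) \in \dprox(\calt)$ because $\|T_tx - T_ty\| = \|T_tv\|$ is small on a set of upper density one. For $(3) \Rightarrow (4)$, I would apply Mycielski's Theorem~\ref{Mycielski} with
\[
R := \dprox(\calt) \cap \bigl\{(x, y) \in X \times X : x - y \in \dunbd(\calt)\bigr\}.
\]
The first factor is $G_\delta$ by Lemma~\ref{G_delta-set} and dense by hypothesis; the second is the preimage of the dense $G_\delta$ set $\dunbd(\calt)$ (Proposition~\ref{characterized-for-distributionally-sensitive}) under the continuous map $(x, y) \mapsto x - y$, and the same translation trick as above (fix $y = y_0$ and approximate $x_0 - y_0$ within $\dunbd(\calt)$) yields density. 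Any $(x, y) \in R$ with $x \neq y$ is a distributional chaotic pair: the $\dprox$-part gives the $\eps$-approximation condition and $x - y \in \dunbd(\calt)$ forces $\udens(\{t : \|T_tx - T_ty\| > 1\}) = 1$. Mycielski then produces the desired Cantor sets $K_j \subseteq O_j$.

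For $(4) \Rightarrow (1)$, given a nonempty open $U \subseteq X$, fix $x_0 \in U$, set $d := \mathrm{dist}(x_0, X \setminus U) > 0$, and apply (4) to $O_1 := B_{\|\cdot\|}(x_0, d/2)$ and $O_2 := B_{\|\cdot\|}(0, d/2)$. Since the resulting Cantor sets $K_1, K_2$ are uncountable, one can take $x \in K_1 \setminus K_2$ and $y \in K_2$; then $x - y \in B_{\|\cdot\|}(x_0, d) \subseteq U$, and Remark~\ref{rem:d-chaos-d-irregular} shows $x - y$ is distributionally semi-irregular. In the separable case, $(4) \Rightarrow (5)$ by choosing $(O_j)$ to be a countable base, and $(5) \Rightarrow (1)$ by approximating any target in $X$ by differences of distinct elements of the dense scrambled set, again via Remark~\ref{rem:d-chaos-d-irregular}. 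I expect the principal technical obstacles to be the diagonal extraction identifying the ``distributionally null'' set with $\dprox(\calt, 0)$ in $(1) \Rightarrow (2)$, and verifying density of the second factor of $R$ in the Mycielski step of $(3) \Rightarrow (4)$.
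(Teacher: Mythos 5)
Your proposal is correct and follows essentially the same route as the paper: both rest on showing $\dprox(\calt,\vecz)$ and $\dunbd(\calt)$ are dense $G_\delta$ sets (your set $\{(x,y): x-y\in\dunbd(\calt)\}$ is the paper's $R_1$, and your "distributionally null set $=\dprox(\calt,\vecz)$" diagonal extraction is used implicitly by the paper too), intersecting them via Baire, and invoking Mycielski's theorem for (4); the only differences are cosmetic reorderings of the implication cycle. One trivial slip: in $(4)\Rightarrow(1)$ the set $K_1\setminus K_2$ could in principle be empty, but you only need $x\in K_1$, $y\in K_2$ with $x\neq y$, which exists since the Cantor sets are uncountable.
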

\begin{proof}

$(2)\Rightarrow(1)$ is trivial.

  $(1)\Rightarrow(3)$. By Theorem~\ref{DC-point-D-sensitive}, $\calt$ is distributionally sensitive.
  Now we will show that  $\dprox(\calt)$ is dense in $X\times X$. 
  
  Denote by $K$ the set of distributionally semi-irregular vectors. Then $K$ is dense in $X$. For any nonempty open subsets
  $U, V\subset X$. Fix $x\in U$. It is clear that $x+K$ is also dense in $X$. Thus we can choose $z\in V\cap (x+K)$. It is easy to see that $(x, z)\in\dprox(\calt)$\,,  which yields that the distributionally proximal relation is dense in $X\times X$. 

  $(3)\Rightarrow(2)$. We first show that the set $\dprox(\calt, \vecz)$ is a dense $G_\delta$ set. By Proposition~\ref{G_delta-set}, $\dprox(\calt, \vecz)$ is a $G_\delta$-subset. 
  
  Now we further show that $\dprox(\calt, \vecz)$ is dense in $X$.
  For any nonempty open subset $U\subset X$, there exists an open neighborhood $W$ of $\vecz$ and a nonempty open subset $V\subset U$ such that $V+W\subset U$. 
  Since $ \dprox(\calt)$ is dense in $X\times X$, there exists $(x,y)\in \dprox(\calt)\cap (W\times (V+W))$, then we have $y-x \in \dprox(\calt,\vecz)$ and $y-x\in (V+W)-W=V\subset U$. Therefore $\dprox(\calt,\vecz)$ is dense $G_{\delta}$ in $X$.
  
 Since $\calt$ is distributionally sensitive, by Proposition~\ref{characterized-for-distributionally-sensitive}(3),
   the set 
  \[
  R_2:= \bigl\{
  y\in X\colon \exists B\subset \Delta  \text{ with }  \udens(B)=1  \text{ s.t } \lim_{t\in B,\, |t|\to\infty} \|T_ty\|=\infty
  \bigr\}
  \]
  is a dense $G_{\delta}$ subset of $X$.
  Then $\dprox(\calt,\vecz)\cap R_2$ is also a dense $G_{\delta}$ set.
  It is clear that for any 
 $x\in \dprox(\calt,\vecz)\cap R_2$, $x$ is a distributionally irregular vector. Thus $\calt$ admits a residual set of distributionally irregular vectors.

  $(3)\Rightarrow(4)$.  Since $\calt$ is distributionally sensitive, by Proposition~\ref{characterized-for-distributionally-sensitive}, $\dunbd(\calt)$ is dense in $X$. Then it is easy to verify that the set
  \[
    R_1= \bigl\{(x,y)\in X\times X\colon \udens(\{t\in \Delta: \|T_tx-T_ty\|> M \})=1, \forall M>0\bigr\}
  \]
  is a dense $G_\delta$ subset of $X\times X$. Thus $R_1\cap \dprox(\calt)$ is also a dense $G_\delta$ set. For any $(x,y)\in R_1\cap \dprox(\calt)$ with $x\neq y$, $(x,y)$ is a distributionally scrambled pair. So the result follows from Theorem \ref{Mycielski}.

  $(4)\Rightarrow(3)$.
  We fix arbitrary nonempty open sets $U_1, U_2 \in X$. Then there exists two Cantor sets $K_1\subset U_1$ and $K_2\subset U_2$ such that $K_1\cup K_2$ is distributionally scrambled. 
  Thus the collection of all distributionally chaotic pairs is dense in $X\times X$. 
  For any $(x,y)\in X\times X$, if $(x,y)$ is a distributionally chaotic pair, then $(x,y)\in\dprox(\calt)$, which implies that $\dprox(\calt)$ is dense in $X\times X$. 
  Besides, if $(x,y)$ is a distributionally chaotic pair, then $x-y$ is a semi-irregular vector. Thus
  $\calt$ is distributionally sensitive by Theorem \ref{DC-point-D-sensitive}. 

Now we obtain that $(1)\Leftrightarrow (2)\Leftrightarrow (3)\Leftrightarrow (4)$, it is easy to verify that if in addition $X$ is separable, $(1)\Leftrightarrow (2)\Leftrightarrow (3)\Leftrightarrow (4)\Leftrightarrow (5)$.
\end{proof}

Applying Theorem~\ref{characterize-of-dense-distributionally-Li-Yorke-chaos}, we have the following characterization of distributional chaos for the $C_0$-semigroup $\{T_t\}_{t \in \Delta}$.

\begin{thm}
\label{characterize-of-distributionally-Li-Yorke-chaos} 
  Let $\calt=\{T_t\}_{t \in \Delta}$ be a $C_0$-semigroup on $X$. Then the following assertions are equivalent:
  \begin{enumerate}
    \item $\calt$ admits a distributionally semi-irregular vector;
    \item $\calt$ admits a distributionally irregular vector;
    \item  $\calt$ is distributionally chaotic;
    \item the restriction of $\calt$ to some closed invariant subspace $\widetilde{X}$ has a dense $G_\delta$ set of distributionally irregular vectors. 
  \end{enumerate}
\end{thm}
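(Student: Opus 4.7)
The plan is to prove $(2) \Rightarrow (1)$, $(1) \Leftrightarrow (3)$, $(4) \Rightarrow (2)$, and $(1) \Rightarrow (4)$; these together give the equivalence of all four assertions. The implications $(2) \Rightarrow (1)$ and $(4) \Rightarrow (2)$ are immediate: every distributionally irregular vector is distributionally semi-irregular, and any distributionally irregular vector of $\calt|_{\widetilde{X}}$ is one of $\calt$ on $X$. For $(1) \Rightarrow (3)$, if $x_0$ is distributionally semi-irregular then for any two distinct $\lambda, \mu \in \bbk$ the difference $(\lambda-\mu)x_0$ is a nonzero scalar multiple of $x_0$, hence still distributionally semi-irregular; by Remark~\ref{rem:d-chaos-d-irregular} the pair $(\lambda x_0, \mu x_0)$ is then distributionally chaotic, so $\bbk x_0$ is an uncountable distributionally scrambled set. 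The reverse $(3) \Rightarrow (1)$ is direct from Remark~\ref{rem:d-chaos-d-irregular}. Thus the real content of the theorem is $(1) \Rightarrow (4)$.

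For $(1) \Rightarrow (4)$, given a distributional semi-irregular vector $x_0$ for $\calt$, I take $\widetilde{X} := \overline{\sspan\{T_t x_0 : t\in\Delta\}}$, a closed $\calt$-invariant subspace of $X$ containing the nonzero vector $x_0$. The conditions defining distributional semi-irregularity depend only on the values $\|T_t x_0\|$, so $x_0$ remains distributional semi-irregular for $\calt|_{\widetilde{X}}$. Theorem~\ref{DC-point-D-sensitive} then shows $\calt|_{\widetilde{X}}$ is distributionally sensitive on $\widetilde{X}$, and Proposition~\ref{characterized-for-distributionally-sensitive} yields that $\dunbd(\calt|_{\widetilde{X}})$ is a dense $G_\delta$ subset of $\widetilde{X}$. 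By Lemma~\ref{G_delta-set}, $\dprox(\calt|_{\widetilde{X}},\vecz)$ is $G_\delta$ in $\widetilde{X}$; the crucial step will be to show it is also dense. Once that is in hand, the Baire category theorem produces a dense $G_\delta$ subset of $\widetilde{X}$ lying in both $\dprox(\calt|_{\widetilde{X}},\vecz)$ and $\dunbd(\calt|_{\widetilde{X}})$, whose elements are precisely the distributionally irregular vectors demanded by~(4).

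For the density, since $\dprox(\calt|_{\widetilde{X}},\vecz) = \dprox(\calt,\vecz) \cap \widetilde{X}$ and the span is dense in $\widetilde{X}$ by construction, it suffices to prove $\sspan\{T_t x_0 : t\in\Delta\} \subseteq \dprox(\calt,\vecz)$. Fix $y = \sum_{i=1}^n \lambda_i T_{t_i} x_0$ and $\varepsilon > 0$; set $\delta := \varepsilon/(1 + \sum_i |\lambda_i|)$ and $A_\delta := \{s\in\Delta : \|T_s x_0\| < \delta\}$, which has $\udens(A_\delta)=1$. From $\|T_t y\| \leq \sum_i |\lambda_i|\,\|T_{t+t_i} x_0\|$ one obtains $\bigcap_{i=1}^n (A_\delta - t_i) \subseteq \{t\in\Delta : \|T_t y\| < \varepsilon\}$, so the task reduces to showing
\[
\udens\biggl(\bigcap_{i=1}^n (A_\delta - t_i)\biggr) = 1.
\]
This is the main obstacle, since in general a finite intersection of upper-density-$1$ sets can fail to have upper density $1$. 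The key is that all sets here are translates of the \emph{same} set $A_\delta$: pick $R_k \to \infty$ along which $\mu(A_\delta \cap \Delta_{R_k})/\mu(\Delta_{R_k}) \to 1$; refining the boundary estimate in the proof of Lemma~\ref{transitive-invariant-property-of-density} and using that the symmetric difference $(t_i + \Delta_{R_k}) \triangle \Delta_{R_k}$ has measure $O(R_k)$ against the quadratic growth $\mu(\Delta_{R_k}) \asymp R_k^2$, one finds $\mu((A_\delta - t_i) \cap \Delta_{R_k})/\mu(\Delta_{R_k}) \to 1$ along the \emph{same} sequence $R_k$ for every $i$. Subadditivity on the complements then forces $\mu(\bigcap_i (A_\delta - t_i) \cap \Delta_{R_k})/\mu(\Delta_{R_k}) \to 1$, as required.
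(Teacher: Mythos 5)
Your proof is correct, but the key implication $(1)\Rightarrow(4)$ is carried out along a genuinely different route from the paper's. The paper takes $\widetilde X$ to be the closure of $X_0=\{y\in X\colon \lim_{t\in A,\,|t|\to\infty}\|T_t y\|=0\}$, where $A$ is the upper-density-one set witnessing the decay of $x_0$; this set is automatically a $\calt$-invariant linear subspace, and every element of it lies in $\dprox(\calt,\vecz)$ by construction, so the density of the proximal cell of $\vecz$ in $\widetilde X$ costs nothing and the conclusion is read off from Theorem~\ref{characterize-of-dense-distributionally-Li-Yorke-chaos}. You instead take the cyclic subspace $\widetilde X=\overline{\sspan\{T_t x_0\colon t\in\Delta\}}$, which forces you to prove the genuinely nontrivial fact that a finite intersection $\bigcap_{i=1}^n(A_\delta-t_i)$ of translates of a \emph{single} upper-density-one set still has upper density one. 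Your argument for this is sound: choosing one sequence $R_k$ realizing the $\limsup$ for $A_\delta$, the estimate $\mu\bigl((A_\delta-t_i)\cap\Delta_{R_k}\bigr)=\mu\bigl(A_\delta\cap(t_i+\Delta_{R_k})\bigr)\geq \mu(A_\delta\cap\Delta_{R_k})-\mu\bigl(\Delta_{R_k}\setminus(t_i+\Delta_{R_k})\bigr)$ together with $\mu\bigl(\Delta_{R_k}\setminus(t_i+\Delta_{R_k})\bigr)\leq \alpha R_k^2-\alpha(R_k-|t_i|)^2=O(R_k)$ against $\mu(\Delta_{R_k})=\alpha R_k^2$ gives relative measure tending to $1$ along the same $R_k$ for every $i$, and subadditivity on complements finishes it; this is a quantitative strengthening of Lemma~\ref{transitive-invariant-property-of-density}. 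What your choice buys is a separable, concretely described $\widetilde X$ and a translate-intersection lemma of independent interest; what the paper's choice buys is brevity. Your direct proof of $(1)\Rightarrow(3)$ via the scrambled line $\bbk x_0$ is also more elementary than the paper's detour through $(4)$ and Mycielski's theorem, at the cost of producing only a one-dimensional scrambled set rather than a dense $\sigma$-Cantor one.
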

\begin{proof}
$(4)\Rightarrow(2)\Rightarrow(1)$ and $(3)\Rightarrow(1)$ are trivial.

$(4)\Rightarrow(3)$. It follows from Theorem \ref{characterize-of-dense-distributionally-Li-Yorke-chaos}.

  $(1)\Rightarrow(4)$. Let $x_0\in X$ be a distributionally semi-irregular vector. Denote $X_0:=\{y\in X \colon \lim_{t\in A,\,|t|\to \infty}\|T_{t}y\|=0
  \}$. Then $x_0\in X_0$ and there exists $A\subset \Delta$ with $\udens(A)=1$ such that $\lim_{t\in A,\, |t|\to \infty}\|T_{t}x_0\|=0$.

For any $t_1\in \Delta$ and $x\in X_0$,
\[
   \lim_{t\in A,\,|t|\to \infty}\|T_{t}T_{t_1}{x}\|=\lim_{t\in A,\,|t|\to \infty}\|T_{t_1}T_{t}{x}\|\leq \|T_{t_1}\|\lim_{t\in A,\,|t|\to \infty}\|T_{t}{x}\|=0.
\]
Thus $X_0$ is a $\calt$-invariant set.

For any $x_1, x_2\in X_0$ and $\alpha, \beta \in \bbk$, 
\[
  \lim_{t\in A,\, |t|\to \infty}\|T_{t}(\alpha x_1+ \beta x_2)\|\leq |\alpha|\lim_{t\in A,\,|t|\to \infty}\|T_{t}{x_1}\|+|\beta|\lim_{t\in A,\,|t|\to \infty}\|T_{t}{x_2}\|=0. 
\]
Thus $X_0$ is a $\calt$-invariant subspace.
Let $\widetilde{X}$ be the closure of $X_0$ in $X$. Then $\widetilde{X}$ is also a $\calt$-invariant subspace of $X$. Since $x_0\in \widetilde{X}$, by Theorem~\ref{DC-point-D-sensitive}, $(\widetilde{X}, \calt)$ is distributionally sensitive. 
It is clear that $X_0\subset \dprox(\calt|_{\widetilde{X}},\vecz)$. Thus $\dprox(\calt|_{\widetilde{X}},\vecz)$ is dense in $\widetilde{X}$ and $\dprox(\calt|_{\widetilde{X}})$ is dense in $\widetilde{X}\times \widetilde{X}$. 
Now $(4)$ follows from Theorem~\ref{characterize-of-dense-distributionally-Li-Yorke-chaos}.
\end{proof}

By the proof of $(1)\Rightarrow (4)$ in Theorem~\ref{characterize-of-distributionally-Li-Yorke-chaos}, we have the following Corollary:

\begin{coro}
Let $\calt=\{T_t\}_{t \in \Delta}$ be a $C_0$-semigroup on $X$. 
Then the set of distributionally irregular vectors is dense in the set of distributionally semi-irregular vectors.
\end{coro}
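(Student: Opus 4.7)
The goal is to show that every distributionally semi-irregular vector $x_0\in X$ can be approximated arbitrarily well by distributionally irregular vectors. The plan is to reuse verbatim the construction appearing in the proof of $(1)\Rightarrow(4)$ of Theorem~\ref{characterize-of-distributionally-Li-Yorke-chaos}, and observe that it yields density, not merely existence.

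Fix a distributionally semi-irregular vector $x_0$ and some $\eps>0$. Let $A\subset \Delta$ be a measurable set with $\udens(A)=1$ witnessing $\lim_{t\in A,\,|t|\to\infty}\|T_tx_0\|=0$, and define
\[
X_0=\bigl\{y\in X\colon \lim_{t\in A,\,|t|\to\infty}\|T_ty\|=0\bigr\}.
\]
As verified in the cited proof, $X_0$ is a $\calt$-invariant linear subspace of $X$ containing $x_0$. Let $\widetilde{X}$ denote its norm closure; then $\widetilde{X}$ is a closed $\calt$-invariant subspace containing $x_0$, and the argument in the cited proof shows that $\calt|_{\widetilde X}$ satisfies the hypotheses of Theorem~\ref{characterize-of-dense-distributionally-Li-Yorke-chaos}, so it admits a dense $G_\delta$ set $R\subset\widetilde{X}$ of distributionally irregular vectors.

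Because $x_0\in \widetilde{X}$ and $R$ is dense in $\widetilde{X}$, there exists $y\in R$ with $\|y-x_0\|<\eps$. Since the norm on $\widetilde{X}$ is inherited from $X$ and the witnessing sets in $\Delta$ are unchanged under inclusion, any vector in $\widetilde{X}$ that is distributionally irregular for $\calt|_{\widetilde X}$ is also distributionally irregular for $\calt$ on $X$. Thus $y$ is a distributionally irregular vector of $\calt$ within $\eps$ of $x_0$, which is the claim.

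There is essentially no new obstacle: the only point that requires care is bookkeeping — namely, verifying that the approximating vector produced in $\widetilde{X}$ is genuinely irregular in the ambient space $X$. This is automatic from the fact that distributional irregularity is defined purely in terms of the norms $\|T_t(\cdot)\|$ and the upper density of subsets of $\Delta$, both of which are preserved when passing from $\widetilde{X}$ to $X$.
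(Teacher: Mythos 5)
Your proposal is correct and follows essentially the same route as the paper, which derives the corollary precisely by pointing to the construction of the closed invariant subspace $\widetilde{X}$ in the proof of $(1)\Rightarrow(4)$ of Theorem~\ref{characterize-of-distributionally-Li-Yorke-chaos} and using the resulting dense $G_\delta$ set of distributionally irregular vectors in $\widetilde{X}$ to approximate $x_0$. Your added remark that irregularity for $\calt|_{\widetilde{X}}$ coincides with irregularity for $\calt$ on $X$ is the right (and only) bookkeeping point.
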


In the following we will show that 
if $\{T_t\}_{t \in \Delta}$ admits a residual distributionally scrambled set, then actually $X$ is a  distributionally scrambled set.
\begin{thm}
   \label{characterize-of-dense-g-dd-Li-Yorke-chaos}
  Let $\{T_t\}_{t \in \Delta}$ be a $C_0$-semigroup on $X$. Then the following assertions are equivalent:
  \begin{enumerate}
    \item $\{T_t\}_{t \in \Delta}$ admits a residual distributionally scrambled set;
    \item every vector in $X\setminus \{\vecz\}$ is distributionally semi-irregular for $\{T_t\}_{t \in \Delta}$;
    \item  $X$ is a  distributionally scrambled set for $\{T_t\}_{t \in \Delta}$.
  \end{enumerate}
\end{thm}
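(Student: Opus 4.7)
My plan is to establish the cyclic chain $(3) \Rightarrow (1) \Rightarrow (2) \Rightarrow (3)$. The implications $(3) \Rightarrow (1)$ and $(2) \Rightarrow (3)$ will be essentially bookkeeping, while $(1) \Rightarrow (2)$ is the only nontrivial step and will be handled through a short Baire category argument exploiting the translation invariance of residuality.

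For $(3) \Rightarrow (1)$ there is nothing to do: the whole space $X$ is trivially a residual subset of itself. For $(2) \Rightarrow (3)$, the plan is to apply Remark~\ref{rem:d-chaos-d-irregular} pointwise. For any two distinct $x, y \in X$, the difference $x - y$ is a nonzero vector, hence distributionally semi-irregular by hypothesis, and so $(x, y)$ is a distributionally chaotic pair. Thus any two distinct points of $X$ form a distributionally chaotic pair, and $X$ itself is a distributionally scrambled set.

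The substance of the argument lies in $(1) \Rightarrow (2)$. Assume $S \subset X$ is a residual distributionally scrambled set and fix an arbitrary $z \in X \setminus \{\vecz\}$. The translation map $x \mapsto x + z$ is a homeomorphism of $X$, so $S - z = \{x \in X : x + z \in S\}$ is again residual. Since $X$ is a Banach space and thus a Baire space, the intersection $S \cap (S - z)$ remains residual, and in particular nonempty. Picking any $x$ in this intersection yields two distinct points $x, x+z \in S$; by the definition of a distributionally scrambled set, $(x, x+z)$ is a distributionally chaotic pair, and by Remark~\ref{rem:d-chaos-d-irregular} the vector $x - (x+z) = -z$ is distributionally semi-irregular. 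Since $\|T_t(-z)\| = \|T_t z\|$ for every $t \in \Delta$, the vector $z$ itself is then distributionally semi-irregular, as required.

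I do not anticipate a serious obstacle here. The only nontrivial ingredient is the translation invariance of residuality combined with the Baire property of $X$, which is standard; the remaining content reduces directly to the definitions and to Remark~\ref{rem:d-chaos-d-irregular}.
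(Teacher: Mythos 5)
Your proposal is correct and follows essentially the same route as the paper: the paper's proof of $(1)\Rightarrow(2)$ likewise intersects the residual scrambled set $K$ with its translate $K-x$ to produce a pair $(y,y+x)$ in $K$, and then invokes the identification of distributionally chaotic pairs with distributionally semi-irregular differences, while treating the other two implications as immediate. Your extra remark that $\|T_t(-z)\|=\|T_tz\|$ just makes explicit a sign detail the paper glosses over.
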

\begin{proof}
  $(1)\Rightarrow(2)$. Let $K$ be the distributionally scrambled set of $\{T_t\}_{t \in \Delta}$. Then $K$ is residual. For any $x\in X\setminus \{\vecz\}$,  $(K-x)\cap K$ is also residual.
  Take $y\in (K-x)\cap K$. One has $(y,y+x)$ is a distributionally chaotic pair, so as $(e,x)$. Thus
  $x$ is a distributionally semi-irregular vector.
  
  $(2)\Rightarrow(3)$ and $(3)\Rightarrow(1)$ is trivial.
\end{proof}
Let $\calt$ be a $C_0$-semigroup. A vector subspace $Y$ of $X$ is called a distributionally irregular manifold if every vector $y\in Y\setminus\{\vecz\}$ is distributionally irregular for $\calt$.

\begin{thm}\label{thm:f-dense-dis-irr-manifold-1}
Let $X$ be a separable Banach space and  $\calt:=\{T_t\}_{t\in \Delta}$ be a $C_0$-semigroup. 
If $\dasym(\calt,\vecz)$ is dense in $X$, then 
$\calt$ is distributionally sensitive if and only if $\calt$ has a dense distributionally irregular manifold.
\end{thm}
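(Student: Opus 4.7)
For the ``if'' direction, any dense distributionally irregular manifold provides a dense collection of vectors in $\dunbd(\calt)$, so Proposition~\ref{characterized-for-distributionally-sensitive} forces $\calt$ to be distributionally sensitive.

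For the ``only if'' direction, I would begin by showing that the hypothesis $\dasym(\calt,\vecz)$ dense forces $\dprox(\calt)$ to be dense in $X\times X$: given $(x,y)\in X\times X$ and $\eps>0$, choose $v\in\dasym(\calt,\vecz)$ with $\|v-(x-y)\|<\eps$ and consider $(x,x-v)$, which lies in $\dprox(\calt)$ since $x-(x-v)=v\in\dasym(\calt,\vecz)\subset\dprox(\calt,\vecz)$. Together with distributional sensitivity this verifies assertion~(3) of Theorem~\ref{characterize-of-dense-distributionally-Li-Yorke-chaos}, and assertion~(2) of that theorem then yields a residual set $D\subset X$ of distributionally irregular vectors.

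Next, I would verify the crucial closure property $D+\dasym(\calt,\vecz)\subset D$. If $u\in D$ has witnessing sets $A_u,B_u\subset\Delta$ of upper density $1$ on which $\|T_t u\|\to 0$ and $\|T_t u\|\to\infty$ respectively, and $v\in\dasym(\calt,\vecz)$ has $\|T_t v\|\to 0$ on $A_v$ of density $1$, then $\ldens(A_v)=1$ forces $\udens(A_u\cap A_v)=\udens(B_u\cap A_v)=1$, and the triangle inequality gives $\|T_t(u+v)\|\to 0$ on $A_u\cap A_v$ and $\|T_t(u+v)\|\to\infty$ on $B_u\cap A_v$. In particular $D$ is translation-invariant under $\dasym(\calt,\vecz)$, so $D$ is a union of cosets of the dense subspace $\dasym(\calt,\vecz)$.

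Finally, separability of $X$ lets me enumerate a countable dense subset $(x_n)_n$ of $X$ and inductively select $y_n\in D$ with $\|y_n-x_n\|<2^{-n}$ so that $Y_n:=\sspan(y_1,\ldots,y_n)$ satisfies $Y_n\setminus\{\vecz\}\subset D$. At each stage the inductive condition on $y_n$ is that $y_n+w\in D$ for every $w\in Y_{n-1}$; Baire's theorem, applied to a countable dense set of coefficients in $Y_{n-1}$, produces a dense $G_\delta$ of candidates, and the coset structure $D=D+\dasym(\calt,\vecz)$ together with the $\calt$-invariance of $D$ (Proposition~\ref{the-orbit-of-DC-vector-is-DC-vector}) lets us pass from rational to arbitrary $\bbk$-coefficients. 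The resulting $Y:=\bigcup_n Y_n$ is a dense distributionally irregular manifold. The main obstacle is precisely this last passage from rational to arbitrary scalar combinations: because $D$ is merely $G_\delta$ and not closed, continuity of the map $(\lambda_1,\ldots,\lambda_{n-1})\mapsto y_n+\sum_i\lambda_i y_i$ does not automatically transfer $D$-membership, and this requires a quantitative refinement at each inductive step using the open sets in the $G_\delta$ presentations of $\dunbd(\calt)$ and of the ``distributionally zero'' set, together with the translation invariance of upper density in Lemma~\ref{transitive-invariant-property-of-density}.
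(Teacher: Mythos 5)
Your backward implication and the initial reductions in the forward direction are fine, and your closure property $D+\dasym(\calt,\vecz)\subset D$ is correct as stated. However, there is a genuine gap at the decisive step, and you have in fact flagged it yourself: the passage from a countable dense set of coefficients to arbitrary scalar coefficients. For a fixed $w$, the set $D-w$ is residual, so Baire's theorem lets you choose $y_n$ with $y_n+w\in D$ for countably many $w\in Y_{n-1}$ simultaneously; but $Y_{n-1}$ is uncountable, and an uncountable intersection of residual sets need not be residual or even nonempty. The repair you propose does not work: the coset structure you established is with respect to $\dasym(\calt,\vecz)$, whereas the difference of two linear combinations with nearby coefficients is a small nontrivial combination of $y_1,\dots,y_{n-1}$, which lies in $D$ and hence \emph{outside} $\dasym(\calt,\vecz)$ (a distributionally unbounded vector cannot be distributionally asymptotic to $\vecz$, so $D\cap\dasym(\calt,\vecz)=\varnothing$). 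Thus perturbing the coefficients moves you to a genuinely different coset, and neither the coset structure nor the $\calt$-invariance of $D$ transfers membership. The ``quantitative refinement'' you allude to is precisely the missing idea.

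The paper fills this gap with a different device. For each measurable $A\subset\Delta$ with $\udens(A)=1$ it introduces the set $P(A)$ of vectors $x$ admitting a subset $A'\subset A$ with $\udens(A')=1$ on which $\|T_tx\|\to 0$, and shows (using the density of $\dasym(\calt,\vecz)$) that $P(A)$ is a dense $G_\delta$. The induction then carries along a nested hierarchy of witnessing sets $A_{i,j}$ and $B_i$, and chooses $x_{n+1}$ in the residual set $\bigcap_{j=1}^{n}P(A_{n,j})\cap P(B_n)\cap D$ --- a \emph{countable} intersection, so Baire applies. The point is that for any nontrivial combination $\sum_\ell\alpha_\ell x_\ell$ with first nonzero coefficient $\alpha_i$, one has a single set $A_{n+1,i+1}$ of upper density $1$ on which $\|T_tx_i\|\to\infty$ while $\|T_tx_\ell\|\to 0$ for every $\ell>i$; these limit statements are invariant under scaling, so the conclusion holds for \emph{all} scalar coefficients at once, with no rational-to-real passage needed. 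Your argument would need to be restructured around some such uniform witnessing-set mechanism to close the gap.
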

\begin{proof} 
  $(\Leftarrow)$
 It follows from Theorem~\ref{DC-point-D-sensitive}.
  
   $(\Rightarrow)$ If $\calt$ is distributionally sensitive, then by Proposition~\ref{characterized-for-distributionally-sensitive}, the set
    \[
         D:=\Bigl\{x\in X\colon  \exists B\subset \Delta \text{ with } \udens(B)=1 \text{ s.t. } \lim_{t\in B,\, |t|\to \infty} \|T_t x\|=\infty\Bigr\}
    \]
    is a dense $G_\delta$ subset of $X$. First, we need the following claim.
\smallskip 

    \noindent\textbf{Claim}: For any set $A \subset \Delta$ with $\udens(A)=1$, the set
    \[
    P(A)=\Bigl\{x\in X \colon \exists A'\subset A \text{ with } \udens(A')=1 \text{ s.t. } \lim_{t\in A',\, |t|\to\infty} \|T_{t}x\|=0\Bigr\}
    \]
    is a dense $G_\delta$ subset of $X$.
    \begin{proof}[Proof of the Claim]
    Since $\asym(\calt,\vecz)\subset P(A)$, $P(A)$ is dense in $X$. Note that 
    \begin{align*}
    P(A)=\bigcap_{n=1}^\infty
    \bigl\{x\in X\colon  \exists r>n \text{ s.t. } 
		 \mu(\{t\in \Delta_r \colon  \|T_t x\|<\tfrac{1}{n}\}\cap A
   )> \mu(\Delta_r) (1-\tfrac{1}{n})\bigr\}.
   \end{align*}
    Thus, $P(A)$ is a $G_\delta$ subset of $X$.
    \end{proof}

    Fix a dense sequence $(y_i)_i$ in $X$. We will recursively construct  a sequence $(x_i)_i$ in $X$
    such that $\|x_i-y_i\|<\frac{1}{i}$ and $\sspan\{x_i\colon i\in\bbn\}$ is a distributionally irregular manifold.

    By the completeness of $X$, $X_1:=P(\Delta)\cap D$ is a dense $G_\delta$ subset of $X$. We pick $x_1\in X_1$ with $d(x_1,y_1)<1$.
    Then, there exist two subset $A_{1,1}\subset \Delta$
    and $B_{1}\subset \Delta$ with $\udens(A_{1,1})=1$ and $\udens(B_{1})=1$ such that
    \[
    \lim_{t\in A_{1,1},\, |t|\to\infty} \|T_t x_1 \|=0
    \text{ and }
    \lim_{t\in B_{1}, \,|t| \to\infty} \| T_t x_1 \|=\infty.
    \]
    It is clear that $\bbk x_1$ is a distributionally irregular manifold.

 Assume that $x_i\in X$, sets $A_{i,j}\subset \Delta$ and $B_{i}\subset \Delta$ have been constructed for $i=1,2,\dotsc,n$ and $j=1,\dotsc,i$ such that 
    \begin{enumerate}
    	\item $d(x_i,y_i)<\frac{1}{i}$ for $i=1,2,\dotsc,n$;
    	\item for $i=2,\dotsc,n$ and $j=1,\dotsc,i-1$, $A_{i,j}$ is a subset of $A_{i-1,j}$,
    	and $A_{i,i}$ is a subset of $B_{i-1}$;
    	\item for $i=2,\dotsc,n$ and $j=1,\dotsc,i$, 
    	$\udens(A_{i,j})=1$, $\udens(B_{i})=1$, 
    	 \[
    	\lim_{t\in A_{i,j},\, |t|\to\infty} \|T_t x_i \|=0
    	\text{ and }
    	\lim_{t\in B_{i},\, |t| \to\infty} \| T_t x_i \|=\infty.
    	\]
     	
    	\item $\sspan\{x_1,x_2,\dotsc,x_n\}$ is an distributionally irregular manifold.
    \end{enumerate}
By the Claim, 
    \[
    X_{n+1}:= \bigcap_{j=1}^nP(A_{n,j}) \cap P(B_{n})\cap D
    \]
    is also a dense $G_\delta$ subset of $X$.
    Now take $x_{n+1}\in X_{n+1}$ with $d(x_{n+1},y_{n+1})<\frac{1}{n+1}$.
    For $j=1,\dotsc,n$, there exists a subset $A_{n+1,j}$ of $A_{n,j}$ 
    such that 
    \[ 
    \lim_{t\in A_{n+1,j}, \, |t|\to\infty} \|T_t {x_{n+1}}\|=0,
    \]
    a  subset $A_{n+1,n+1}$ of $B_{n}$
    such that 
    \[ 
    \lim_{t\in A_{n+1,n+1}, \, |t|\to\infty}\|T_t x_{n+1}\|=0,
    \]
    and a subset $B_{n+1}\subset \Delta$ such that
    \[ 
    \lim_{t\in B_{n+1} ,\, |t|\to\infty}\|T_t {x_{n+1}}\|=\infty.
    \]
    For any $\sum_{\ell=1}^{n+1} \alpha_\ell x_\ell\in  \sspan\{x_1,\dotsc,x_n,x_{n+1}\}\setminus\{\vecz\}$, we have
    \begin{align*}
    \lim_{t\in A_{n+1,1},\, |t|\to\infty} \left\|T_t
    \biggl(\sum_{\ell=1}^{n+1} \alpha_\ell x_\ell\biggl)\right\|&
    \leq \sum_{\ell=1}^{n+1}|\alpha_\ell|\lim_{t\in A_{n+1,1}, \, |t|\to\infty} \|T_t x_{\ell}\|
    =0.
    \end{align*}
    Let $i=\min\{\ell\in \{1,\dotsc,n+1\}\colon \alpha_\ell\neq 0\}$.
    If $i<n+1$, then
    \begin{align*}
    \lim_{t\in A_{n+1, i+1},\, |t|\to\infty} \|T_t (\sum_{\ell=i}^{n+1} \alpha_\ell x_\ell)\|
    &\geq 
    |\alpha_i| 
    \lim_{t\in A_{n+1, i+1},\, |t|\to\infty}\|T_t x_i\| -\sum_{\ell=i+1}^{n+1} |\alpha_\ell|\limsup_{t\in A_{n+1, i+1}, \, |t|\to\infty} \|T_t x_\ell\| \\
    &=\infty-0=\infty.
    \end{align*}
    If $i=n+1$, then by the construction one has 
    \[
    \lim_{t\in B_{n+1} ,\, |t| \to\infty} \| T_{t}(x_{n+1})\|=\infty.
    \]
    So $\sspan\{x_1,\dotsc,x_n,x_{n+1}\}$ is a distributionally irregular manifold. A recursive argument gives us a sequence $(x_i)_i$ fulfilling that the subspace $\sspan\{x_i\colon i\in\bbn\}$ is a dense distributionally irregular manifold.
\end{proof}
    
\section{Distributionally chaos for translation \texorpdfstring{$C_0$}{C0}-semigroups on complex sectors}

In this section, 
we will investigate distributional chaos for translation $C_0$-semigroups. First, we introduce the translation $C_0$-semigroups, which has been generalized to complex sectors in \cite{CP2009}.

\begin{defn}\label{defn:admissible}
Let $\Delta$ be a complex sector. 
A measurable function $v:\Delta\to \bbr$ is said to be an \emph{admissible weight function} if it verifies $v(t)>0$ for every $t\in\Delta$, and there exist constants $M\geq 1$ and $w\in \bbr$ such that  
\[v(t)\leq M e^{w|t'|}v(t+t')\ \text{for all} \ t,t'\in \Delta.\]
\end{defn}

Now, we can define the spaces where the translation $C_0$-semigroup will act.

\begin{defn}
Let $v:\Delta\to \bbr$ be an admissible weight function.
For $1\leq p<\infty$ and $\bbk=\bbr$ or $\bbc$, we define the space 
\[L_v^p(\Delta):=\{f: \Delta \to \bbk: f  \text{ is measurable and } \|f\|\leq \infty\},
\]
with $\|f\|:=\Bigl(\int_{\Delta} |f(t)|^{p}v(t)\dd t\Bigr)^{\frac{1}{p}}<\infty$. 
\end{defn}
    
\begin{defn}
Let $X:=L_v^p(\Delta)$. 
For $t\in\Delta$ and $f\in X$, we define $T_t f$ as $T_t(f(s)):=f(s+t)$ for every  $s\in\Delta$. 
We call $\{T_t\}_{t\in \Delta}$ the \emph{translation semigroup} on $X$.
\end{defn}

The second condition in Definition \ref{defn:admissible}, as in the real case, allows the translation
semigroup to be $C_0$-semigroup on our spaces
(see Example $7.4$ of \cite{GP2011} for the real case).

First, we will characterizes distributional chaos for the translation semigroup on complex sectors.

\begin{thm}\label{characterization-of-DC-chao-for-translation-semigroups}
  Let $v$ be an admissible weight function and $\calt=\{T_t\}_{t\in \Delta}$ be the translation semigroup on $X=L^p_v (\Delta)$, Then the following assertions are equivalent:
  \begin{enumerate}
    \item there exist some $f\in X$ and $\delta>0$ such that $\udens(\{t\in \Delta: \|T_t f\|\geq \delta\})=1$;
    \item $\calt$ is distributionally sensitive;
    \item $\calt$ is densely distributionally chaotic;
    \item $\calt$ admits a dense distributionally irregular manifold.
  \end{enumerate}
\end{thm}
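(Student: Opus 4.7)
The plan is to use the machinery developed in Section~3 combined with the specific geometry of the translation semigroup on a complex sector. The implications $(2)\Rightarrow(1)$, $(3)\Rightarrow(2)$ and $(4)\Rightarrow(3)$ follow formally from the general theory: by Proposition~\ref{characterized-for-distributionally-sensitive} a distributionally sensitive $\calt$ admits a distributionally unbounded vector $f$, and this at once gives $\udens(\{t\in\Delta\colon \|T_tf\|\geq\delta\})=1$ for every $\delta>0$; dense distributional chaos produces a distributionally chaotic pair and hence a distributionally semi-irregular vector by Remark~\ref{rem:d-chaos-d-irregular}, which by Theorem~\ref{DC-point-D-sensitive} yields distributional sensitivity; and a dense distributionally irregular manifold $Y$ is itself a dense distributionally scrambled set, because for distinct $x,y\in Y$ the difference $x-y\in Y\setminus\{\vecz\}$ is distributionally irregular and hence $(x,y)$ is a distributionally chaotic pair.

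The substantive step is $(1)\Rightarrow(4)$, which I plan to deduce from Theorem~\ref{thm:f-dense-dis-irr-manifold-1}, applicable because $L^p_v(\Delta)$ is separable. Two hypotheses need to be verified: that $\dasym(\calt,\vecz)$ is dense in $X$, and that $\calt$ is distributionally sensitive.

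For the density of $\dasym(\calt,\vecz)$ the essential point is a sector estimate: for every $s,t\in\Delta(\alpha)$ one has $|s+t|\geq c_\alpha|t|$ with $c_\alpha>0$ depending only on $\alpha$. Starting from $|s+t|^2=|s|^2+|t|^2+2|s||t|\cos(\arg s-\arg t)$, where $|\arg s-\arg t|\leq 2\alpha<\pi$, the estimate is immediate when $|\arg s-\arg t|\leq\pi/2$, and a one-variable minimization over $|s|\geq 0$ in the complementary case yields $|s+t|\geq |t|\sin(2\alpha)$. Consequently any measurable $\phi$ supported in $\Delta_R$ satisfies $T_t\phi(s)=\phi(s+t)=0$ for every $s\in\Delta$ once $|t|>R/c_\alpha$; in particular $\|T_t\phi\|=0$ eventually, so $\phi\in\dasym(\calt,\vecz)$. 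Admissibility of $v$ in Definition~\ref{defn:admissible} makes $v$ locally bounded, so compactly supported functions are dense in $L^p_v(\Delta)$, giving the claim.

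For the distributional sensitivity, let $A:=\{t\in\Delta\colon \|T_tf\|\geq\delta\}$, so $\udens(A)=1$ by hypothesis (1). Given $x\in X$ and $\eps>0$, pick a compactly supported $\phi$ with $\|\phi-f\|<\eps$ and let $R$ bound its support; set $y:=x+(f-\phi)$. Then $\|y-x\|<\eps$, and for every $t\in A$ with $|t|>R/c_\alpha$ the sector estimate gives $T_t\phi=0$, whence $\|T_ty-T_tx\|=\|T_t(f-\phi)\|=\|T_tf\|\geq\delta$. Since $A\setminus\Delta_{R/c_\alpha}$ differs from $A$ by a bounded set and hence has upper density one, $\calt$ is distributionally sensitive with constant $\delta/2$ (say). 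An invocation of Theorem~\ref{thm:f-dense-dis-irr-manifold-1} then produces the desired dense distributionally irregular manifold. The main obstacle I expect is the sector geometry estimate; once it is in hand, the rest is a direct assembly of the tools already developed in Section~3.
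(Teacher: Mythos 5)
Your proposal is correct and follows essentially the same route as the paper: distributional sensitivity is extracted from assertion (1) by truncating $f$ outside a large ball, the density of $\dasym(\calt,\vecz)$ comes from the fact that compactly supported functions are eventually annihilated by the translations, and Theorem~\ref{thm:f-dense-dis-irr-manifold-1} then supplies the dense distributionally irregular manifold, with the remaining implications handled by the Section~3 machinery exactly as in the paper. The only notable difference is your explicit sector estimate $|s+t|\geq c_\alpha|t|$ with $c_\alpha=\min\{1,\sin(2\alpha)\}$, which is a more careful version of the paper's assertion that $s+\Delta\subset\Delta\setminus\Delta_{m_k}$ whenever $|s|\geq m_k$ (false as literally stated when $\alpha>\pi/3$), so your argument in fact repairs a small inaccuracy in the published proof.
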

\begin{proof}
$(1)\Rightarrow(2).$ Assume that  there exists $f\in X$ and $\delta>0$ such that $\udens(\{t\in \Delta: \|T_t f\|\geq \delta\})=1$. 
\medskip
Let $A:=\{t\in \Delta: \|T_t f\|\geq \delta\}$. 
Since $f\in X:=L^p_v (\Delta)$,
for any $k>0$, there exists
$m_k\in \bbn$ such that
$\int_{\Delta\setminus \Delta_{m_k}}|f(t)|^p\rho(t)\dd t<\frac{1}{k^p}$.
Define 
\[
h(t)= \begin{cases}
f(t),  &  \ \Delta\setminus \Delta_{m_k}; \\
0,  & \ \Delta_{m_k}. \\
\end{cases}
\]
then 
\[\|h\|=\biggl(\int_{\Delta\setminus \Delta_{m_k}}|f(t)|^p\rho(t)\dd t\biggr)^{\frac{1}{p}}<\frac{1}{k}.
\]
For any $s\in \Delta\setminus\Delta_{m_k}$, 
$s+\Delta\subset \Delta\setminus\Delta_{m_k}$, then
for any $t\in\Delta$ we have
\[T_sh(t)=h(s+t)=f(s+t)=T_sf(t).\]
Now we define $B:=A\cap(\Delta\setminus\Delta_{m_k})$, then $\udens(B)=1$ since $\udens(A)=1$.  
For any $g\in X$, let $g_1:=g+h$ then $\|g-g_1\|=\|h\|<\frac{1}{k}$, and for any $s\in B$
\[\|T_sg-T_sg_1\|=\|T_s h\|=\|T_s f\|\geq \delta,
\]
which implies that $\udens\{t\in \Delta: \|T_sg-T_sg_1\|\geq \delta\}=1$. Hence, 
$\{T_t\}_{t\in \Delta}$ is distributionally sensitive. 

   $(2)\Rightarrow(4)$.  
   Let $X_0$ be the set of all functions in $X$ with compact support.
   Then $X_0$ is dense in $X$ and $X_0\times X_0$ is dense in $X\times X$. Now we show that $X_0\times X_0\subset\dasym(\calt)$.
   For any $f,g\in X_0$, since $f,g$ are functions with compact support, there exists $r_0$ such that when $|s|>r_0$, $\|T_s f\|=\|T_s g\|=0$.
   Thus for any $\eps>0$, $\dens\{t\in\Delta:\|T_t f-T_t g\|<\eps\}=1$, which implies that $(f,g)\in \dasym(\calt)$. 
   Then $\dasym(\calt)$ is dense in $X\times X$.
  Similar to the proof of Theorem \ref{characterize-of-dense-distributionally-Li-Yorke-chaos} $(3)\Rightarrow(1)$, we can show that 
  $\dasym(\calt,\vecz)$ is dense in $X$. Then the result follow from Theorem~\ref{thm:f-dense-dis-irr-manifold-1}.

   $(4)\Rightarrow(3)$. It follows from Theorem \ref{characterize-of-distributionally-Li-Yorke-chaos}.

   $(3)\Rightarrow(1)$ is trivial.
\end{proof}

Now we will give a sufficient condition for dense distributional chaos, expressed in terms of the weight are given.

\begin{thm}\label{necessary-of-distributionally-chaos}
  Let $v$ be an admissible weight function and $\calt=\{T_t\}_{t\in \Delta}$ be the translation $C_0$-semigroup on $X=L^p_v (\Delta(\alpha))$. If there exists a subset $K\subset\bbn$ with $\udens(K)=1$ such that 
  \[
    \sum_{k\in K}\int_{\Delta_{k,k+1}} v(s) \dd s<\infty,
  \] 
  where $\Delta_{k,k+1}=\{t\in \Delta: k \leq |t|\leq k+1\}$,
  then the translation $C_0$-semigroup $\calt=\{T_t\}_{t\in \Delta}$ is dense distributionally chaotic. 
\end{thm}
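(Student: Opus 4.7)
The plan is to invoke Theorem~\ref{characterization-of-DC-chao-for-translation-semigroups}, specifically $(1)\Rightarrow(3)$: it suffices to produce $f\in X$ and $\delta>0$ with $\udens(\{t\in\Delta:\|T_tf\|\geq\delta\})=1$. The natural candidate is $f=\chi_\Lambda$, where $\Lambda:=\bigcup_{k\in K}\Delta_{k,k+1}$; the hypothesis gives $\|f\|_X^p=\sum_{k\in K}\int_{\Delta_{k,k+1}}v(s)\,\dd s<\infty$, so $f\in X$.

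To localise the orbit estimate, I would first thin $K$ to $K_3:=\{k\in K:k-1,k+1\in K\}\subset\bbn$. Since $K\setminus K_3$ consists of $k\in K$ with $k-1\notin K$ or $k+1\notin K$, one has $|(K\setminus K_3)\cap[1,n]|\leq 2|[1,n+1]\setminus K|$, and the right side is $o(n)$ along any subsequence realising $\udens(K)=1$; hence $\udens(K_3)=1$ in $\bbn$. The key radial estimate is then: for $t\in\Delta_{k,k+1}$ with $k\in K_3$, and any $s\in\Delta_{1/4}$, the triangle inequality gives $|s+t|\in[k-1/4,k+5/4]$, so $\lfloor|s+t|\rfloor\in\{k-1,k,k+1\}\subset K$; therefore $f(s+t)=1$, and integrating yields $\|T_tf\|^p\geq\int_{\Delta_{1/4}}v(s)\,\dd s$ for every $t$ in the set $A:=\bigcup_{k\in K_3}\Delta_{k,k+1}$.

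To turn this into a positive constant, I would use admissibility of $v$ to prove $c_0:=\int_{\Delta_{1/4}}v(s)\,\dd s<\infty$. Fixing any $k_0\in K$ and setting $t_0:=k_0+1/2\in\Delta$, admissibility gives $v(s)\leq Me^{w(k_0+1/2)}v(s+t_0)$ on $\Delta_{1/4}$; a direct computation using $|s+t_0|\in[k_0+1/4,k_0+3/4]$ shows $\Delta_{1/4}+t_0\subset\Delta_{k_0,k_0+1}$, so a change of variables produces $c_0\leq Me^{w(k_0+1/2)}\int_{\Delta_{k_0,k_0+1}}v<\infty$. Setting $\delta:=c_0^{1/p}>0$, we obtain $A\subset\{t\in\Delta:\|T_tf\|\geq\delta\}$.

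Finally, I would verify $\udens(A)=1$. Writing $\mu(A\cap\Delta_r)=\alpha\sum_{k\in K_3,\,k+1\leq r}(2k+1)$ and $\mu(\Delta_r)=\alpha r^2$, one picks a subsequence of positive integers $r_j\to\infty$ with $|K_3\cap[1,r_j]|/r_j\to 1$ and estimates $\sum_{k\in K_3,\,k<r_j}(2k+1)\geq\sum_{k=1}^{r_j-1}(2k+1)-(2r_j+1)\cdot|[1,r_j)\setminus K_3|=r_j^2(1-o(1))$, so the ratio tends to $1$. The main technical obstacle I anticipate is precisely this density computation: translating the one-dimensional counting density $\udens(K)=1$ into the two-dimensional Lebesgue density in $\Delta$ forces the weight $2k+1$ on each annulus, so the naive choice $|t|\in[k+1/4,k+3/4]$ only delivers density $1/2$; it is the adjacent-annulus enlargement to $K_3$, combined with the triangle-inequality estimate in Step~2, that upgrades the bound to density~$1$.
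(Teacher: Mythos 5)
Your proof is correct and follows essentially the same route as the paper: both take $f$ to be the indicator function of $\bigcup_{k\in K}\Delta_{k,k+1}$, bound $\|T_tf\|$ from below by a fixed $\delta>0$ on a union of annuli of upper density one (computed by the same $(2k\pm1)/r^2$ counting argument), and conclude via Theorem~\ref{characterization-of-DC-chao-for-translation-semigroups}(1). The only difference is a minor technical one: the paper keeps $K$ intact, takes the good set to be $\bigcup_{k\in K}\Delta_{k-1,k}$, and extracts $\delta$ from a pointwise lower bound for $v$ on $\overline{\Delta_2}$ together with the estimate $\mu((t+\overline{\Delta_2})\cap\Delta_{k,k+1})\geq\mu(\Delta_1)$, whereas you thin $K$ to $K_3$ so that $t+\Delta_{1/4}$ lies entirely in the support of $f$ and take $\delta^p=\int_{\Delta_{1/4}}v$.
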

\begin{proof}
 
Denote 
$K\cap[1,n]=\{n_1,\dots, n_{k_n}\}$ with $n_{1}<\dotsc<n_{k_n}$.
\begin{align*}
   \udens(\bigcup_{k\in K}\Delta_{k-1,k})&=
  \limsup_{r\to \infty}\frac{\mu(\bigcup_{k\in K}\Delta_{k-1,k}\cap \Delta_r)}{\mu(\Delta_{r})}
  \geq
  \limsup_{n\to \infty}\frac{\mu(\bigcup_{k\in K\cap[1,n]}\Delta_{k-1,k})}{\mu(\Delta_{n_{k_n}})}\\
  &\geq
  \limsup_{n\to \infty}
  \frac{\alpha(\Sigma_{k\in K\cap[1,n]}(2k-1))}{\alpha n^2}
  \geq
\limsup_{n\to \infty}
  \frac{\alpha(2\Sigma_{i=1}^{k_n}i-k_n)}{\alpha n^2}\\
  &=\limsup_{n\to \infty} \frac{k^2_n}{n^2}
  =\biggl(\limsup_{n\to \infty} \frac{\#(K\cap[1,n])}{n}\biggr)^2
  =1.
\end{align*}

Let 
\[
f(s) = 
\begin{cases}
1 & \text{ } s\in \bigcup_{k\in K}\Delta_{k,k+1}; \\
0 & \text{ others }.
\end{cases}
\]
Since $\|f\|=\int_{\Delta} |f(s)|^pv(s)\dd s=\sum_{k\in K}\int_{\Delta_{k,k+1}} v(s) \dd s<\infty$, $f\in L^p_v (\Delta(\alpha))$. 
By the admissibility
of the weight, 
pick $\delta>0$ such that $v(s)\geq \frac{\delta^p}{\alpha}$ for every $s\in \overline{\Delta_2}$.
For any $t\in \bigcup_{k\in K}\Delta_{k-1,k}$, 
there exists $k\in K$ such that
$t\in \Delta_{k-1,k}$. Then $\mu((t+\overline{\Delta_2})\cap\Delta_{k,k+1})\geq\mu(\Delta_1)=\alpha$.
Thus,
\begin{align*}
  \|T_t f\|&\geq \biggl(\int_{\overline{\Delta_2}}|T_t f(s)|^p v(s)\dd s\biggr)^{\frac{1}{p}}
     \geq  \delta  \biggl( \frac{1}{\alpha} \int_{\overline{\Delta_2}}| f(s+t)|^p \dd s\biggr)^{\frac{1}{p}}\\
           &=\delta \biggl(\frac{1}{\alpha} \int_{t+\overline{\Delta_2}} |f(s)|^p \dd s\biggr)^{\frac{1}{p}}
           \geq\delta \biggl(\frac{1}{\alpha} \int_{(t+\overline{\Delta_2})\cap\Delta_{k,k+1}} |f(s)|^p \dd s\biggr)^{\frac{1}{p}}\\ 
           &=\delta \biggl(\frac{1}{\alpha} \mu((t+\overline{\Delta_2})\cap\Delta_{k,k+1})\biggr)^{\frac{1}{p}}\geq\delta.
\end{align*}

Then $\{t\in \Delta: \|T_t f\|\geq \delta\}\supset \bigcup_{k\in K}\Delta_{k-1,k}$ and 
$\udens(\{t\in \Delta: \|T_t f\|\geq \delta\})\geq \udens(\bigcup_{k\in K}\Delta_{k-1,k})=1$, and the result follows from Theorem \ref{characterization-of-DC-chao-for-translation-semigroups}. 
\end{proof}

Using Theorem \ref{necessary-of-distributionally-chaos} we provide two examples of dense distributional chaotic for the transition $C_0$-semigroups on complex sectors.

\begin{exam}
  Let  $1\leq p<\infty$. Consider the translation $C_0$-semigroup $(T_{t})_{t\in \Delta(\frac{\pi}{4})}$ defined on $X=L^p_{v}(\Delta(\frac{\pi}{4}))$ with $v(t)=e^{-|t|}$, $t\in \Delta(\frac{\pi}{4}) $. Then, for any $t_1\in \Delta(\frac{\pi}{4})$ and $t_2\in \Delta(\frac{\pi}{4})$, we have 
  \[
    e^{|t_2|}v(t_1+t_2)=e^{|t_2|}e^{-|t_1+t_2|}\geq e^{-|t_1|}=v(t_1).
  \]
   Thus $v$ satisfies the admissible weight condition (see Definition \ref{defn:admissible}) and
   $\{T_t\}_{t\in \Delta(\frac{\pi}{4})}$ is a $C_0$-semigroup on $X=L^p_{v}(\Delta(\frac{\pi}{4}))$.
  Since
  \[
    \int_{\Delta({\frac{\pi}{4}})} v(t)\dd t=\frac{\pi}{2}<\infty,
  \]
  the translation $C_0$-semigroup is densely distributional chaotic by Theorem~\ref{necessary-of-distributionally-chaos}.
\end{exam}
\begin{exam}
Let $1\leq p<\infty$. Consider the translation semigroup $(T_{t})_{t\in \Delta(\frac{\pi}{4})}$ defined on $X=L^p_{v}(\Delta(\frac{\pi}{4}))$ with $v(t):=\frac{1}{|t|^4+1}$, $t\in \Delta(\frac{\pi}{4})$.

One can easily verify that $v$ satisfies the admissible weight condition (see Definition \ref{defn:admissible}) and thus
$\{T_t\}_{t\in \Delta(\frac{\pi}{4})}$ is a $C_0$-semigroup on $X=L^p_{v}(\Delta(\frac{\pi}{4}))$.
Since
  \[
    \int_{\Delta(\frac{\pi}{4})} v(t)\dd t=\frac{\pi^2}{8}<\infty
  \]
the translation $C_0$-semigroup is densely distributional chaotic by Theorem~\ref{necessary-of-distributionally-chaos}.
\end{exam}

Let $\calt=\{T_t\}_{t \in \Delta}$ be a $C_0$-semigroup on $X$. Recall that the $C_0$-semigroup $\{T_t\}_{t\in \Delta(\alpha)}$ is called \emph{hypercyclic} if there exists $x\in X$ such that the orbit $\{T_t x\colon t\in \Delta\}$ is dense in $X$. If in addition the $C_0$-semigroup $\{T_t\}_{t\in \Delta(\alpha)}$ admits a dense set of periodic points, then it is called \emph{Devaney chaos}.

In {\cite[Corollary 1]{CP2007}}, Conejero and Peris gave the following sufficient condition for a translation semigroup to be Devaney chaos. 

\begin{prop}\label{character-of-Devaney-chaos}
  Let $X=L^p_v(\Delta)$ and let $R\subset \Delta$ be a ray that it is not contained in the boundary of $\Delta$. The following are equivalent:
  \begin{enumerate}
  \item There exists $t\in R$ such that 
  \[
    \sum_{k=0}^{\infty} v(kt)< \infty;
  \]
  \item The restriction $\{T_t\}_{t\in R}$ of the translation semigroup to the ray $R$ admits a non-trivial periodic point.
  \end{enumerate}
  Besides, any of these conditions implies that the translation semigroup $\{T_t\}_{t\in \Delta}$ on $X$ is Devaney chaotic.
\end{prop}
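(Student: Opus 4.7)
The plan is to establish the equivalence $(1)\Leftrightarrow(2)$ via an explicit periodization construction that exploits the summability of $v(kt)$ along the ray, and then to derive Devaney chaos from either condition by combining a dense set of periodic points (the same construction applied to a dense family of bumps) with hypercyclicity of the translation semigroup along the ray.

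For $(1)\Rightarrow(2)$, I fix $t\in R$ with $\sum_{k=0}^{\infty} v(kt)<\infty$ and introduce the fundamental domain $F:=\Delta\setminus(\Delta+t)$ for the action of $+t$. Since $R$ lies in the interior of $\Delta$, the iterates $s-nt$ eventually leave $\Delta$, so each $s\in\Delta$ decomposes uniquely as $s=u+kt$ with $u\in F$ and $k\in\bbn\cup\{0\}$. I pick a non-zero continuous $g$ compactly supported in the interior of $F$ with support $K$, and define $f\colon\Delta\to\bbk$ by $f(u+kt):=g(u)$; then $f$ is non-trivial, measurable, and satisfies $T_tf=f$. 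To verify $f\in L^p_v(\Delta)$, Fubini gives
\[
\|f\|^p=\int_F|g(u)|^p\sum_{k=0}^{\infty} v(u+kt)\,\dd u,
\]
so the key task reduces to a uniform bound $\sum_k v(u+kt)\leq C\sum_k v(kt)$ for $u\in K$. Since $R$ is interior to $\Delta$ and $K$ is bounded, I choose $N_0\in\bbn$ so large that $N_0 t-u\in\Delta$ for every $u\in K$; applying admissibility with $t'=N_0 t-u$ gives $v(u+kt)\leq M e^{w|N_0 t-u|}v((k+N_0)t)\leq C\,v((k+N_0)t)$ uniformly on $K$, and summing yields the desired bound, hence $\|f\|<\infty$.

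For $(2)\Rightarrow(1)$, I take $f\in L^p_v(\Delta)\setminus\{\vecz\}$ with $T_tf=f$; the same decomposition and Fubini force $\int_F|f(u)|^p\sum_k v(u+kt)\,\dd u<\infty$, whence $\sum_k v(u_0+kt)<\infty$ for some $u_0\in F$ with $f(u_0)\neq 0$, and admissibility $v(kt)\leq M e^{w|u_0|}v(u_0+kt)$ transfers the summability back to the ray. For the Devaney chaos conclusion, letting $g$ range over a dense family of continuous compactly supported functions yields a dense set of periodic points of $T_t$, and thus of the whole semigroup; hypercyclicity follows by a Desch--Schappacher--Webb style Hypercyclicity Criterion along the ray, with the formal ``shift-back'' operators $S_t$ defined by extending by zero off $\Delta+t$ satisfying $T_tS_t=I$ on compactly supported functions, and the summability ensuring both $T_{kt}h\to 0$ and $S_{kt}h\to 0$ for such $h$.

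I expect the main obstacle to be the uniform upper bound on $\sum_k v(u+kt)$ in the $(1)\Rightarrow(2)$ step, since admissibility as stated only supplies lower bounds under forward translation. The argument must therefore artificially push forward to a later ray point $(k+N_0)t$ through the auxiliary shift $N_0 t-u\in\Delta$, and this is exactly where the hypothesis that $R$ is not contained in the boundary of $\Delta$ enters crucially: an $N_0$ that works uniformly over $K$ exists only when $t$ lies in the interior of $\Delta$, which is the geometric feature separating this argument from its one-dimensional analogue.
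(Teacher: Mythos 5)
This proposition is not proved in the paper at all: it is quoted verbatim from Conejero--Peris \cite{CP2007} (their Corollary~1), so there is no in-paper argument to compare against. Judged on its own, your proof is essentially the standard one and is sound. The fundamental-domain decomposition $s=u+kt$ with $u\in F=\Delta\setminus(\Delta+t)$ is legitimate (it only needs $-t\notin\Delta$, which holds since $\alpha<\pi/2$), the Tonelli computation $\|f\|^p=\int_F|g(u)|^p\sum_k v(u+kt)\,\dd u$ is correct, and your device of pushing $u+kt$ forward to $(k+N_0)t$ via the shift $t'=N_0t-u\in\Delta$ is exactly the right way to convert the one-sided admissibility estimate into the needed uniform upper bound; you also correctly identify that this is where the interiority of $R$ is used. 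The converse direction and the transfer of summability back to the ray via $v(kt)\leq Me^{w|u_0|}v(kt+u_0)$ are fine.

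The one step you state too loosely is the density of periodic points. Periodizing functions $g$ supported in the \emph{fixed} fundamental domain $F$ of $t$ only produces periodic vectors approximating elements of a rather special set; to approximate an arbitrary compactly supported $h$ you must periodize with period $nt$ for $n$ large, using that (i) for $n$ large the support of $h$ lies in $\Delta\setminus(\Delta+nt)$ (because $\mathrm{dist}(0,nt+\Delta)\to\infty$), and (ii) the error $\|f_n-h\|^p\lesssim \sum_{k\geq 1}v((kn+N_0)t)\to 0$ as $n\to\infty$, which is a tail estimate of the convergent series in (1). This is routine but is genuinely needed, and your phrase ``letting $g$ range over a dense family'' skips it. The hypercyclicity half via the Hypercyclicity Criterion with $S_th(s)=h(s-t)\mathbf{1}_{\Delta}(s-t)$ is correct as outlined.
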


In \cite[Corollary 2.6]{BP2012}, Barrachina and Peris showed that Devaney chaos implies distributionally chaos for the translation $C_0$-semigroup with $\bbr^+$ as an index set, where $\bbr^+=\{t\in\bbr\colon t\geq 0\}$. 
However, in the following we provide an example to illustrate that for the translation
$C_0$-semigroup $\calt=\{T_t\}_{t \in \Delta}$ on $X=L^p_v (\Delta(\alpha))$, Devaney chaos does not imply distributional chaos.

\begin{exam}\label{Devaney-chaos-not-distributionally-chaos}
  Consider the translation semigroup $\calt=\{T_t\}_{t\in \Delta(\frac{\pi}{4})}$ on $X:=L^p_{v}(\Delta(\frac{\pi}{4}))$ with $v(x+iy) = e^{2y}$. 

For any $t_1:=(x_1+iy_1)\in \Delta(\frac{\pi}{4})$ and $t_2:=(x_2+iy_2)\in \Delta(\frac{\pi}{4})$, we have
  \[
    e^{2|t_2|}v(t_1+t_2)=e^{2\sqrt{x_2^2+y_2^2}}e^{2y_1+2y_2}\geq e^{2y_1}=v(t_1).
  \]
  Thus $v$ satisfies the admissible weight condition (see Definition \ref{defn:admissible}) and
   $\{T_t\}_{t\in \Delta(\frac{\pi}{4})}$ is a $C_0$-semigroup on $X=L^p_{v}(\Delta(\frac{\pi}{4}))$.
  
  Consider the ray $R:=\{r-\frac{r}{2}i\in \Delta(\frac{\pi}{4}):r\in\bbr^+\} \subset \Delta(\frac{\pi}{4})$ and $t_1:=2-i\in R$. Since
  \[
    \sum_{k=0}^{\infty} v(kt_1)= \sum_{k=0}^{\infty} e^{-2k}=\frac{e^2}{e^2-1}<\infty.
  \]
  By Proposition~\ref{character-of-Devaney-chaos}, $\{T_t\}_{t\in \Delta(\frac{\pi}{4})}$ is Devaney chaotic.
  
  For any $f\in L^p_{v}(\Delta(\frac{\pi}{4}))$ and $\eps>0$,
there exists
$r(\eps)\in \bbn$ such that
\[\int_{\Delta\setminus \Delta_{r(\eps)}}|f(t)|^pv(t)\dd t<\eps^p.
\]
Let $A=\{x+iy\in \Delta(\frac{\pi}{4}): y \geq 0\}$. For any $t\in (\Delta\setminus \Delta_{r(\eps)})\cap A$, $t+\Delta \subset \Delta\setminus \Delta_{r(\eps)}$ and $v(s)\leq v(s+t)$ for all $s\in\Delta$, then we have 
  \begin{align*}
  \|T_{t} f\|&=
  \biggl(\int_\Delta |f(s+t)|^p v(s)\dd s\biggr)^{\frac{1}{p}}\leq  \biggl(\int_{\Delta} |f(s+t)|^p v(s+t)\dd s\biggr)^{\frac{1}{p}}\\
 &=\biggl(\int_{t+\Delta} |f(s)|^p v(s)\dd s\biggr)^{\frac{1}{p}}\leq \biggl(\int_{\Delta\setminus \Delta_{r(\eps)}}|f(s)|^pv(s)\dd s\biggr)^{\frac{1}{p}}< \eps.
\end{align*}
Thus $\{t\in \Delta: \|T_t f\|< \eps\}\supset(\Delta\setminus \Delta_{r(\eps)})\cap A$.
This implies that $\ldens\{t\in \Delta: \|T_t f\|< \eps\}\geq
\ldens((\Delta\setminus \Delta_{r(\eps)})\cap A)=\frac{1}{2}$. Therefore 
\[\udens(\{t\in \Delta: \|T_t f\|\geq \eps\})=1-\ldens(\{t\in \Delta: \|T_t f\|<\eps\})\leq 1-\frac{1}{2}=\frac{1}{2}.\]
By Theorem \ref{characterization-of-DC-chao-for-translation-semigroups}, $\{T_t\}_{t\in \Delta(\frac{\pi}{4})}$ is not distributionally chaotic. 
\end{exam}

In fact, in Example \ref{Devaney-chaos-not-distributionally-chaos} we can further show that the restriction $\{T_t\}_{t\in R}$ of the translation semigroup $\calt=\{T_t\}_{t\in \Delta(\frac{\pi}{4})}$ to the ray $R:=\{r-\frac{r}{2}i\in \Delta(\frac{\pi}{4}):r\in\bbr^+\} \subset \Delta(\frac{\pi}{4})$ is distributionally chaotic. 
By Proposition~\ref{character-of-Devaney-chaos}, there exists $t_1\in R$ and $f_1\in L^p_{v}(\Delta(\frac{\pi}{4}))$ such that $T_{t_1} f_1=f_1$. Let $X_0$ be the dense subspace of continuous functions with compact supports. 
It is easy to check that $\lim_{n\to \infty} T_{t_1}^n f=0$ for any $f\in X_0$. Therefore, by \cite[Corollary 35]{BBPW2018} $T_{t_1}$ is distributional chaotic. Then by \cite[Theorem 5]{BBPW2018} $\{T_{t}\}_{t\in R}$ is distributionally chaotic.

\bigskip 

\noindent \textbf{Acknowledgment.}
J. Li was supported in part by NSF of China (12222110). Y. Yang was partially supported by STU Scientific Research Initiation Grant (SRIG, No. NTF24025T).



\begin{thebibliography}{99}

\bibitem{ABMP2013}
A. Albanese, X.Barrachina, E. M. Mangino and A. Peris,
   \emph{Distributional chaos for strongly continuous semigroups of
   operators}, Commun. Pure Appl. Anal.\textbf{12} (2013), no.~5, 2069--2082.

  \bibitem{BP2012} X. Barrachina and A. Peris, \emph{Distributionally chaotic translation semigroups}, J. Difference Equ. Appl. \textbf{18} (2012), no.~4, 751--761.  

    \bibitem{BM2009}
  F. Bayart and \'{E}. Matheron, \emph{Dynamics of linear operators}, Cambridge Tracts in Mathematics, vol. 179, Cambridge University Press, Cambridge, 2009.

  \bibitem{BBMP2011}
  T. Berm\'{u}dez, A. Bonilla, F. Mart\'{\i }nez-Gim\'{e}nez and A. Peris, \emph{Li-Yorke and distributionally chaotic operators}, J. Math. Anal. Appl. \textbf{373} (2011), no.~1, 83--93.

  \bibitem{BBMP2013}
  N. C. Bernardes Jr., A. Bonilla, V. M\"{u}ller and A. Peris,
  \emph{Distributional chaos for linear operators}, J. Funct. Anal.
  \textbf{265} (2013), no.~9, 2143--2163.

  \bibitem{BBMP2015}
  N. C. Bernardes Jr., A. Bonilla, V. M\"{u}ller and A. Peris, \emph{Li-Yorke chaos in linear dynamics}, Ergodic Theory Dynam. Systems \textbf{35} (2015), no.~6, 1723--1745.

  \bibitem{BBP2020}
  N. C. Bernardes Jr., A. Bonilla and A. Peris,
  \emph{Mean Li-Yorke chaos in
    Banach spaces}, J. Funct. Anal. \textbf{278} (2020), no.~3, 108343, 31 pp.

  \bibitem{BBPW2018}
  N. C. Bernardes Jr., A. Bonilla, A. Peris and X. Wu, \emph{Distributional chaos for operators on Banach spaces}, J. Math. Anal. Appl. \textbf{459}
  (2018), no.~2, 797--821.

  \bibitem{CKPV2020} B. Chaouchi, M. Kosti{\'c}, S. Pilipovi{\'c}  and D. Velinov, \emph{f-Frequently hypercyclic \(C_{0}\)-semigroups on complex sectors}, Banach J. Math. Anal. \textbf{14} (2020),1080--1110. 

   \bibitem{CMP2007} J. Conejero, V. Müller and A. Peris, \emph{Hypercyclic behaviour of operators in a hypercyclic $C_0$-semigroup}, J. Funct. Anal. \textbf{244} (2007), 342--348.
   
  \bibitem{CP2007} J. Conejero and A. Peris, 
  \emph{Chaotic translation semigroups}, Discrete Contin. Dyn. Syst. (2007), 269--276.

  \bibitem{CP2009}
  J. Conejero and A. Peris, \emph{Hypercyclic translation semigroups on complex sectors}, Discrete Contin. Dyn. Syst.\textbf{25} (2009), no. 4, 1195--1208.

\bibitem{DSW1997}
W. Desch, W. Schappacher and G. F. Webb, \emph{Hypercyclic and chaotic semigroups of linear
operators}, Ergodic Theory Dynam. Systems, \textbf{17} (1997), 793--19.


  \bibitem{EN2000}
  K.-J. Engel and R. Nagel, \emph{One-parameter semigroups for linear evolution equations},
  Graduate Texts in Mathematics, Springer-Verlag, New York, 2000.

  \bibitem{GP2011}
  K.-G. Grosse-Erdmann and A. Peris Manguillot, \emph{Linear chaos},
  Universitext, Springer, London, 2011.

\bibitem{HLYS2024}
  S. He, X. Liu, Z. Yin and X. Sun, \emph{Relationships among Various
 Chaos for Linear Semiflows Indexed
 with Complex Sectors}, Mathematics.
 \textbf{12} (2024), no.~20, 3167.

  \bibitem{JL2025}
  Z. Jiang and J. Li,
  \emph{Chaos for endomorphisms of completely metrizable metrizalbe groups and linear operators on Fr\'echet space}, J. Math. Anal. Appl. \textbf{543} (2025), no.~2, Paper No. 129033.

  \bibitem{LXZ2023}
  Y. Liang, Z. Xu and Z. Zhou, \emph{The recurrent hypercyclicity criterion for translation
    {$C_0$}-semigroups on complex sectors}, Bull. Korean Math. Soc. \textbf{60} (2023), no.~2, 293--305.

  \bibitem{M1964}
  J. Mycielski, \emph{Independent sets in topological algebras}, Fund. Math.
  \textbf{55} (1964), 139--147.

  \bibitem{YCN2017}
  Y. Yao, X. Chen and X. Niu, \emph{Distributional chaos on complex sectors}, (Chinese) Acta Math. Sci. Ser. A (Chinese Ed.) \textbf{37} (2017), no.~5, 950--961.

\end{thebibliography}
\end{document}